\newtheorem{theorem}{Theorem}[section]
\newtheorem{definition}[theorem]{Definition}
\newtheorem{corollary}[theorem]{Corollary}
\newtheorem{claim}[theorem]{Claim}
\newtheorem{example}[theorem]{Example}
\newtheorem{proposition}[theorem]{Proposition}
\newtheorem{remark}[theorem]{Remark}
\begin{document}

\title{Weighted  Mean Topological  Dimension
	\footnotetext {2010 Mathematics Subject Classification: 37B40, 37C45}}
\author{ Yunping Wang \\
	\small    School of Mathematical Sciences and Institute of Mathematics, Nanjing Normal University,\\
	\small   Nanjing 210046, Jiangsu, P.R.China\\
 \small    e-mail:  yunpingwangj@126.com 
}
\date{}

\maketitle

\begin{abstract}
This paper is devoted to the investigation of the weighted mean topological dimension in dynamical systems. We show that the weighted mean dimension  is not larger than the weighted metric mean dimension, which generalizes the classical result of  Lindenstrauss and Weiss \cite{LWE}. We also establish the relationship between the weighted mean dimension and the weighted topological entropy of dynamical systems, which indicates that each system with finite weighted topological entropy or small boundary property has zero weighted mean dimension.
\end{abstract}
\noindent
\textbf{Keywords:} weighted  mean topological dimension; weighted entropy; small boundary property

\section{Introduction}
One of the basic invariants of a dynamical system $(X,T)$ is its topological entropy. This quantifies to what extent nearby points diverge as the system evolves.  Adler, Konheim and McAndrew \cite{AKM} first introduced it as an invariant of topological conjugacy for studying dynamical systems in compact topological spaces. Subsequently, Dinaburg \cite{Din} and Bowen \cite{Bow2} presented a different, yet equivalent, definition in metric spaces.  Kenyon and Peres \cite{KE} studied the dimension of invariant sets and measures on the torus under expanding maps. In \cite{BF,F}, Barral and Feng  defined weighted topological pressure via relative thermodynamic formalism and subadditive thermodynamic formalism, in the particular case when the underlying dynamical systems $(X, T)$ and $(Y, S)$ are subshifts over finite alphabets.
Motivated by this work, Feng and Huang \cite{WH3} extended to general topological systems and introduced a  weighted version of entropy,  which provided a topological extension of dimension theory of invariant sets and measures on the torus under affine diagonal endomorphisms. For convenience, we recall the definition of the weighted entropy here.

Let $k\geq 2$. Assume that $(X_{i},d_{i}), i=1,\cdots,k$, are compact metric spaces, and $(X_{i},T_{i})$ are topological dynamical systems. Moreover, assume that for each $1\leq i\leq k-1,(X_{i+1},T_{i+1})$ is a factor of $(X_{i},T_{i})$  with a factor map $\pi_{i} : X_{i}\to X_{i+1};$ in other words, $\pi_{1},\cdots,\pi_{k-1}$ are continuous maps such that the following  diagrams commute,
\begin{equation*}
	\xymatrix{
	X_{1}\ar[r]^{\pi_{1}} \ar[d]_{T_{1}} &X_{2}\ar[r]^{\pi_{2}}\ar[d]_{T_{2}}  &\cdots \ar[r]^{\pi_{k-1}} &X_{k}\ar[d]_{T_{k}}\\
	X_{1}\ar[r]^{\pi_{1}}  &X_{2}\ar[r]^{\pi_{2}}  &\cdots \ar[r]^{\pi_{k-1}} &X_{k}
}
\end{equation*}
For convenience, we use $\pi_{0}$ to denote  the identity map on $X_{1}$. Define $\tau_{i}: X_{1}\to X_{i+1}$ by $\tau_{i}=\pi_{i}\circ\pi_{i-1}\circ\cdots\circ\pi_{0}$ for $i=0,1,\cdots,k-1.$
Let $M(X_{i},T_{i})$ denote the set of all $T_{i}$-invariant Borel probability measures on $X_{i}$ and $E(X_{i},T_{i})$ denote the set of ergodic measures. Fix
$\bf{a}$$=(a_{1},a_{2},\cdots,a_{k})\in \mathbb R^{k}$ with $a_{1}>0$ and $a_{i}\geq 0$ for $i\geq 2.$ For $\mu\in M(X_{1},T_{1})$, we call
  $$h^{\bf{a}}_{\mu}(T_{1}):=\sum_{i=1}^{k}a_{i}h_{\mu\circ\tau_{i-1}^{-1}}(T_{i})$$
  the $\bf{a}$-weighted measure-theoretic entropy of $\mu$ with respect to $T_{1}$, or simply, the $\bf{a}$-weighted entropy of $\mu$, where
$h_{\mu\circ\tau_{i-1}^{-1}}(T_{i})$ denotes the measure-theoretic entropy of $\mu\circ\tau_{i-1}^{-1}$ with respect to $T_{i}.$
\begin{definition}\label{define1.1}{\rm\cite{WH3}}
{\rm ($\bf{a}$-weighted Bowen ball).} For $x\in X_{1},n\in \mathbb N,\epsilon>0,$
let
\begin{align*}
B_{n}^{\bf{a}}(x,\epsilon):=\{y\in X_{1}: d_{i}(T_{i}^{j}\tau_{i-1}x, T_{i}^{j}\tau_{i-1}y)<\epsilon~~\text{for}~~~&1\leq i\leq k ~~\\&0\leq j\leq \lceil(a_{1}+\cdots+a_{i})n\rceil-1 \},
\end{align*}
where $\lceil u\rceil$ denotes the least integer $\geq u$. We call $B_{n}^{\bf{a}}(x,\epsilon)$ the $n$-th $\bf{a}$-weighted Bowen ball of radius $\epsilon$ centered at $x$.
\end{definition}

Return back to the metric spaces $(X_{i},d_{i})$ and topological dynamical systems $(X_{i},T_{i}),i=1,2,\cdots,k.$ For $n\in \mathbb N$, define a metric $d_{n}^{\bf a}$ on $X_{1}$ by
\begin{align*}
d_{n}^{\bf a}(x,y)=\sup\{d_{i}(T_{i}^{j}\tau_{i-1}x, T_{i}^{j}\tau_{i-1}y):  1\leq i \leq k, 0\leq j\leq \lceil(a_{1}+\cdots+a_{i})n\rceil-1\}.
\end{align*}
 \begin{definition}\label{sc}
	Let $n$ be a natural number, $\epsilon >0$, $\bf{a}$$=(a_{1},a_{2},\cdots,a_{k})\in \mathbb{R}^{k}$ and $K$ be a compact subset of $X_1$. A subset $F$ of $X_1$ is said to $({\bf a},n,\epsilon)$ spanning set of $K$ with respect to $T_1$ if for all $ x\in K $, there exists $ y\in F $ with $d_{n}^{\bf a}(x,y) \leq \epsilon$. Let $r_{n}^{\bf a}(\epsilon,K)$ denote the smallest cardinality of any $({\bf a},n,\epsilon)$ spanning set for $K$ with respect to $T_1$.
\end{definition}

\begin{definition}
	Let $n$ be a natural number, $\epsilon >0$, $\bf{a}$$=(a_{1},a_{2},\cdots,a_{k})\in \mathbb{R}^{k}$and $K$ be a compact subset of $X_1$. A subset E of $X_1$ is said to be $({\bf a},n,\epsilon)$ separated with respect to $T_1$ if $x , y\in E, x\neq y$, implies $ d_{n}^{\bf a}(x,y) > \epsilon$. Let $s_{n}^{\bf a}(\epsilon,K)$ denote the largest cardinality of any $({\bf a},n,\epsilon)$ separated set for $K$ with respect to $T_1$.
\end{definition}
The  ${\bf a}$-weighted topological entropy $ h^{\bf a}(X_1) $   using spanning sets and separated sets is defined  as follows.	
\begin{align*}
h^{\bf a}(X_1)&=\lim\limits_{\epsilon \rightarrow 0} \limsup\limits_{n\rightarrow\infty}\frac{1}{n}\log {r_n^{\bf a}}(X_1,\epsilon)\\&=\lim\limits_{\epsilon \rightarrow 0} \limsup\limits_{n\rightarrow\infty}\frac{1}{n}\log {s_n^{\bf a}}(X_1,\epsilon).
\end{align*}
Mean  dimension is a conjugacy invariant of topological dynamical systems which was introduced by  Gromov \cite{GRO}.  In 2000, Lindenstrauss and Weiss \cite{LWE} used it to answer  an open question raised by Auslander \cite{AU} that whether every minimal system $(X,T)$ can be imbedded in $[0, 1]^{\mathbb{Z}}$.  It turns out  that   mean dimension  is the right invariant to study  for the problem  of existence of an embedding into $(([0,1]^{D})^{\mathbb Z}, \sigma)$.  More applications and interesting relations have been found  \cite{GT, LT14, GLT16, GT19}. 
 The metric mean dimension was introduced in \cite{LWE} and they proved that metric mean dimension is  an upper bound of the  mean dimension. It allowed them to establish the relationship between the mean dimension and the topological
entropy of dynamical systems, which shows that each system with finite topological entropy has zero mean dimension.  This  invariant enables one to distinguish systems with infinite topological entropy. In \cite{LWE2, LWE}, they also introduced the small boundary property (SBP) by   replacing the empty set with the small set in the definition of zero dimensional space,  which could be seen as a dynamical version of being totally disconnected, and indicated  that a dynamical system with this property has  zero mean dimension. In order to  investigate the embedding  question  of weighted topological dynamical systems,
we will  introduce the concepts of  the weighted mean dimension and weighted metric mean dimension  inspired by the ideas of  \cite{ WH3, LWE}.

 In  this paper, we introduce the mean dimension for weighted version which include the classic case as $a_{1}=1, a_{i}=0$, $ i\geq2$. We also define the weighted metric mean dimension and show that topological dynamical system $(X_{1}, T_{1})$  with finite weighted topological entropy, or small boundary property has zero weighted mean  dimension. Meanwhile,
  we obtain an inequality between the weighted mean dimension and the weighted metric mean dimension. The following theorems present the main results of this paper.

\begin{theorem}\label{thm1}
  For any dynamical system $(X_1,T_1)$ and any metric $d_i$ compatible with the topology on $X_i,\ i=1,2,\cdots,k,$
  $$ {\rm mdim}^{\bf a}(X_1,T_1)\leq  {\rm mdim}_{M}^{\bf a}(X_1,d),$$
  where the definitions of these notions will be given in Section \ref{3}.
\end{theorem}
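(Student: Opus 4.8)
The plan is to reduce Theorem \ref{thm1} to a single inequality valid for an \emph{arbitrary} compact metric space, one that bounds its width dimension by its covering numbers, and then to feed the weighted Bowen metric $d_n^{\bf a}$ into that inequality and pass to the limit. Recall from Section \ref{3} that the weighted mean dimension has the form $\mathrm{mdim}^{\bf a}(X_1,T_1)=\lim_{\epsilon\to0}\limsup_{n\to\infty}\frac1n\mathrm{Widim}_\epsilon(X_1,d_n^{\bf a})$, where $\mathrm{Widim}_\epsilon(Z,\rho)$ is the least dimension of a polyhedron admitting a continuous $\epsilon$-map from $(Z,\rho)$, whereas the weighted metric mean dimension has the form $\mathrm{mdim}_M^{\bf a}(X_1,d)=\liminf_{\epsilon\to0}\frac{1}{\log(1/\epsilon)}\limsup_{n\to\infty}\frac1n\log r_n^{\bf a}(\epsilon,X_1)$, with $r_n^{\bf a}(\epsilon,X_1)$ the spanning number of Definition \ref{sc}. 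The essential observation is that both invariants are manufactured from the same metric $d_n^{\bf a}$, so the dynamics and the weight ${\bf a}$ enter only through this metric. Consequently the argument can follow Lindenstrauss--Weiss \cite{LWE} almost verbatim, replacing the Bowen metric $d_n$ by $d_n^{\bf a}$ and the covering number by $r_n^{\bf a}$.

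First I would isolate the geometric core, a statement about compact metric spaces that is blind to the dynamics. Given a finite open cover $\mathcal U$ of $(Z,\rho)$ all of whose members have diameter less than $\epsilon$, a partition of unity subordinate to $\mathcal U$ defines a continuous map $f$ of $Z$ into the nerve $|\mathcal U|$, a polyhedron of dimension $\mathrm{ord}(\mathcal U)-1$; if $f(x)=f(y)$ then $x$ and $y$ share a common member of $\mathcal U$, so $\rho(x,y)<\epsilon$, and hence $f$ is an $\epsilon$-map. This gives
\begin{equation*}
\mathrm{Widim}_\epsilon(Z,\rho)\le \mathrm{ord}(\mathcal U)-1 .
\end{equation*}
The width dimension at scale $\epsilon$ is therefore governed by the least possible order (multiplicity) of a cover of $(Z,\rho)$ by sets of diameter $<\epsilon$. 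The second and more delicate step is to bound this minimal order by the covering number $N(Z,\rho,\epsilon)$, in the normalized form in which, after division by $n$ and the passage $\epsilon\to0$, it reproduces the factor $1/\log(1/\epsilon)$; this is the effective, quantitative version of the classical inequality $\dim_{\mathrm{top}}\le\underline{\dim}_{\mathrm{box}}$.

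With this lemma in hand I would apply it to $(Z,\rho)=(X_1,d_n^{\bf a})$ for each $n$, noting that the relevant covering number is exactly $r_n^{\bf a}(\epsilon,X_1)$ of Definition \ref{sc}. Dividing by $n$ and taking $\limsup_{n\to\infty}$ — during which the lower-order (in $n$) correction terms vanish — yields
\begin{equation*}
\limsup_{n\to\infty}\frac1n\mathrm{Widim}_\epsilon(X_1,d_n^{\bf a})\le \frac{1}{\log(1/\epsilon)}\limsup_{n\to\infty}\frac1n\log r_n^{\bf a}(\epsilon,X_1).
\end{equation*}
Finally, taking $\liminf_{\epsilon\to0}$ on both sides and using that the left-hand side is monotone in $\epsilon$ — so that its limit equals $\mathrm{mdim}^{\bf a}(X_1,T_1)$ — turns the right-hand side into $\mathrm{mdim}_M^{\bf a}(X_1,d)$, which is precisely the asserted inequality.

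The main obstacle is the second step of the geometric lemma: producing a cover of $(X_1,d_n^{\bf a})$ of small \emph{order} rather than merely small cardinality, with the order controlled by $\log r_n^{\bf a}(\epsilon,X_1)/\log(1/\epsilon)$. The naive cover by the $\epsilon$-balls of a spanning set has the right cardinality but potentially enormous multiplicity, so a genuine dimension-theoretic refinement is needed, and this is where the normalization by $\log(1/\epsilon)$ is created. By contrast, the passage from the classical to the weighted setting is essentially free: the geometric lemma never inspects the internal structure of the metric, so the different time windows $\lceil(a_1+\cdots+a_i)n\rceil$ and the nested factor maps $\tau_i$ that define $d_n^{\bf a}$ cause no additional difficulty once the spanning numbers are interpreted through the weighted metric.
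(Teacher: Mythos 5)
There is a genuine gap, and it is not merely an omission: the ``geometric lemma'' your whole reduction rests on is false as stated. You want, for a compact metric space $(Z,\rho)$, a bound of the minimal order of a mesh-$<\epsilon$ cover (equivalently of $\mathrm{Widim}_\epsilon(Z,\rho)$) by $\log N(Z,\rho,\epsilon)/\log(1/\epsilon)$ up to corrections that vanish after dividing by $n$, and you then apply it at the \emph{same} scale $\epsilon$ to $(X_1,d_n^{\bf a})$ to get
$\limsup_{n}\frac1n\mathrm{Widim}_\epsilon(X_1,d_n^{\bf a})\le \frac{1}{\log(1/\epsilon)}\limsup_{n}\frac1n\log r_n^{\bf a}(\epsilon,X_1)$.
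This single-scale inequality fails. Take trivial factors $X_2=\cdots=X_k=\{\mathrm{pt}\}$, ${\bf a}=(1,0,\cdots,0)$ (a case your scheme must cover), $X_1=([0,\tfrac12]^M)^{\mathbb Z}$ with the shift $T_1$ and $d_1(x,y)=\sup_{m\in\mathbb Z}2^{-|m|}\|x_m-y_m\|_\infty$, and $\epsilon=\tfrac14$. The single point $c=(\tfrac14,\cdots)_{m\in\mathbb Z}$ satisfies $d_n^{\bf a}(x,c)\le\tfrac14$ for every $x$, so $r_n^{\bf a}(\tfrac14,X_1)=1$ and your right-hand side is $0$. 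On the other hand, the set of sequences supported on coordinates $0,\ldots,n-1$ is isometric (in $d_n^{\bf a}$) to the cube $[0,\tfrac12]^{Mn}$ with the sup metric, whose $\mathrm{Widim}_{1/4}$ equals $Mn$ by the Lindenstrauss--Weiss cube lemma; equivalently, by the Lebesgue covering theorem every open cover of $X_1$ of $d_n^{\bf a}$-mesh $<\tfrac14$ has order $\ge Mn+1$. So your inequality asserts $M\le 0$. (The theorem itself is safe here: ${\rm mdim}^{\bf a}={\rm mdim}_M^{\bf a}=M$, because the covering numbers at scales $\delta\to 0$ do grow; it is only the fixed-$\epsilon$ comparison that dies.) The failure is linear in $n$, so no ``lower-order correction'' rescues it, and replacing $N(\epsilon)$ by $N(\epsilon/C)$ for a universal constant $C$ fails too (take side length $2\epsilon$ and let $\epsilon\to0$). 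There simply is no quantitative single-scale version of $\dim_{\mathrm{top}}\le\underline{\dim}_{\mathrm{box}}$: covering numbers at scale $\epsilon$ are blind to all structure below scale $\epsilon$.

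This is exactly why the paper's proof (following Lindenstrauss--Weiss, Theorem 4.2) is organized around \emph{two} scales that cannot be merged. One fixes the covers $\alpha_i$, refines them into joins of two-set covers with subordinate Lipschitz functions $w_{i,j}$ of constant $C_L$, and only then chooses the covering scale $\delta$, required to satisfy both $\delta<\bigl(2^{\sum_i r_i\lceil(a_1+\cdots+a_i)\rceil}(2C_L)^{2D}\bigr)^{-2/\epsilon}$ and $S^{\bf a}(X_1,\delta)/|\log\delta|\le D+\epsilon/4$ (the latter available because ${\rm mdim}_M^{\bf a}$ is a $\liminf$, so suitable $\delta$ exist arbitrarily small); only after that does $N\to\infty$. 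The map $F(N,\cdot)$ into the cube, the probabilistic choice of $\xi$ avoiding all projections $F(N,X_1)_S$ with $|S|\ge(D+\epsilon)N$, and the retraction onto the $\lfloor(D+\epsilon)N\rfloor+1$ skeleton are precisely the mechanism that converts a covering bound at the microscopic scale $\delta$ into an order bound for a cover living at a fixed macroscopic scale; your plan collapses these two scales into one, which is where it becomes false rather than just incomplete. Two further points: the $\mathrm{Widim}$ formula you ``recall'' is not the paper's definition of ${\rm mdim}^{\bf a}$ (Definition \ref{topological} is a sup over covers of the factor spaces $X_i$ pulled back by $\tau_{i-1}$ with unequal time windows); the identity ${\rm mdim}^{\bf a}(X_1,T_1)=\lim_{\epsilon\to0}\limsup_n\frac1n\mathrm{Widim}_\epsilon(X_1,d_n^{\bf a})$ is true but needs a Lebesgue-number argument through the commuting diagrams, which you would have to supply. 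And even granting all reformulations, the lemma you defer --- in its correct two-scale form --- is the entire content of the theorem, so deferring it leaves no proof.
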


\begin{theorem}\label{thm3}
  If $(X_1,T_1)$ has the SBP, the weighted mean  dimension is zero.
\end{theorem}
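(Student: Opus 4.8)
The plan is to reduce the weighted mean dimension of $(X_1,T_1)$ to a fixed multiple of its ordinary mean dimension, and then to invoke the classical theorem of Lindenstrauss and Weiss \cite{LWE}, according to which the small boundary property forces the unweighted mean dimension to vanish. The point is that all the factor dynamics occurring in $d_n^{\bf a}$ can be pushed back onto $X_1$. Since each diagram commutes, $\tau_{i-1}\circ T_1=T_i\circ\tau_{i-1}$, hence $T_i^{j}\tau_{i-1}=\tau_{i-1}T_1^{j}$, and with $b_i:=a_1+\cdots+a_i$ one has
\[
d_n^{\bf a}(x,y)=\max_{1\le i\le k}\ \max_{0\le j<\lceil b_i n\rceil} d_i\big(\tau_{i-1}T_1^{j}x,\ \tau_{i-1}T_1^{j}y\big).
\]
Set $\rho(x,y):=\max_{1\le i\le k} d_i(\tau_{i-1}x,\tau_{i-1}y)$. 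Because $\tau_0$ is the identity we have $\rho\ge d_1$, each term is continuous, and $\rho(x,y)=0$ forces $d_1(x,y)=0$; thus $\rho$ is a metric compatible with the topology of $X_1$. Using $b_i\le b_k$ and bounding every term by the $i=k$ range yields the domination
\[
d_n^{\bf a}(x,y)\ \le\ \rho^{(m)}(x,y):=\max_{0\le j<m}\rho\big(T_1^{j}x,T_1^{j}y\big),\qquad m:=\lceil b_k n\rceil,
\]
where $\rho^{(m)}$ is exactly the $\rho$-Bowen metric of $(X_1,T_1)$ over the horizon $m$.

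Next I would exploit the monotonicity of the width dimension in the metric: since $d_n^{\bf a}\le\rho^{(m)}$, every $\epsilon$-cover for $\rho^{(m)}$ is an $\epsilon$-cover for $d_n^{\bf a}$, so $\mathrm{Widim}_\epsilon(X_1,d_n^{\bf a})\le \mathrm{Widim}_\epsilon(X_1,\rho^{(m)})$. Dividing by $n$ and using $m/n\to b_k$,
\[
\limsup_{n\to\infty}\frac1n\,\mathrm{Widim}_\epsilon(X_1,d_n^{\bf a})
\ \le\ b_k\,\limsup_{m\to\infty}\frac1m\,\mathrm{Widim}_\epsilon\big(X_1,\rho^{(m)}\big).
\]
Letting $\epsilon\to0$, the left-hand side is ${\rm mdim}^{\bf a}(X_1,T_1)$ by the definition in Section~\ref{3}, while the right-hand side is $b_k\cdot{\rm mdim}(X_1,T_1)$ computed with the compatible metric $\rho$; as mean dimension is independent of the choice of compatible metric, this gives ${\rm mdim}^{\bf a}(X_1,T_1)\le b_k\,{\rm mdim}(X_1,T_1)$. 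Since $(X_1,T_1)$ has the SBP, the classical vanishing result forces ${\rm mdim}(X_1,T_1)=0$, and the theorem follows (note that only the SBP of $(X_1,T_1)$ is needed, never that of the factors).

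The main obstacle, and the only place where the hypothesis is genuinely used, is the implication ``SBP $\Rightarrow$ zero mean dimension''. If a self-contained argument is preferred to citing \cite{LWE}, I would instead run the Lindenstrauss construction directly on $X_1$: from the SBP extract, for each $\epsilon$, a finite open cover of $\rho$-mesh $<\epsilon$ whose boundary $E$ has zero orbit capacity, refine it dynamically over $[0,m)$, and bound the order of the refinement by the number of $j$ with $T_1^{j}x\in E$, which is $o(m)$ uniformly in $x$ precisely because $\mathrm{ocap}(E)=0$. Checking that this orbit-capacity estimate survives the time-rescaling $m=\lceil b_k n\rceil$ is routine, so the real content remains the unweighted statement, with everything else being the metric bookkeeping recorded above.
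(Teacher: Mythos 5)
Your proposal is correct in substance but takes a genuinely different route from the paper. The paper re-runs Lindenstrauss's partition-of-unity construction in the weighted setting: it pulls each cover back to $X_1$ as $\beta_i=\tau_{i-1}^{-1}\alpha_i$, uses Proposition \ref{smal} to build maps $f_N^i$ whose images lie in finitely many affine subspaces of dimension $\epsilon c_i\lceil(a_1+\cdots+a_i)N\rceil$, and then sums the resulting bounds on $D(\bigvee_j T_1^{-j}\beta_i)$ over $i$ via subadditivity of $D$ (Proposition \ref{ad}). You instead prove the structural inequality ${\rm mdim}^{\bf a}(X_1,T_1)\le (a_1+\cdots+a_k)\,{\rm mdim}(X_1,T_1)$ and then quote the classical theorem of \cite{LWE} that the SBP forces ${\rm mdim}(X_1,T_1)=0$. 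Your reduction rests on correct observations (the commutation $T_i^j\tau_{i-1}=\tau_{i-1}T_1^j$ and the domination $d_n^{\bf a}\le\rho^{(m)}$ with $m=\lceil(a_1+\cdots+a_k)n\rceil$), and it buys something the paper's argument does not: the weighted mean dimension vanishes whenever the ordinary mean dimension does, for any reason, not only under the SBP. What it gives up is self-containedness, since the unweighted theorem is used as a black box rather than re-proved.

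There is one gap you should repair. You assert that $\lim_{\epsilon\to 0}\limsup_{n}\frac1n\mathrm{Widim}_\epsilon(X_1,d_n^{\bf a})$ ``is ${\rm mdim}^{\bf a}(X_1,T_1)$ by the definition in Section \ref{3}''; it is not. Definition \ref{topological} is stated in terms of open covers and the quantity $D$, and $\mathrm{Widim}$ appears nowhere in the paper, so the inequality ${\rm mdim}^{\bf a}(X_1,T_1)\le\lim_{\epsilon\to 0}\limsup_{n}\frac1n\mathrm{Widim}_\epsilon(X_1,d_n^{\bf a})$ must be proved, not invoked. It does hold, by a Lebesgue-number argument: with $\delta=\min_i\mathrm{Leb}(\alpha_i,d_i)$, every $d_n^{\bf a}$-ball of radius $\delta$ lies in a single element of the weighted join, so every $\delta$-embedding is compatible with that join by Proposition \ref{b}, and Proposition \ref{2.4} then gives $D(\text{join})\le\mathrm{Widim}_\delta(X_1,d_n^{\bf a})$. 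Alternatively, and more cleanly, you can avoid $\mathrm{Widim}$ altogether and run your reduction at the level of covers: putting $\gamma=\bigvee_{i=1}^k\tau_{i-1}^{-1}\alpha_i$ and $m=\lceil(a_1+\cdots+a_k)n\rceil$, the unweighted cover $\bigvee_{j=0}^{m-1}T_1^{-j}\gamma$ refines the weighted join because $\lceil(a_1+\cdots+a_i)n\rceil\le m$ for every $i$; monotonicity of $D$ under refinement, subadditivity of the unweighted sequence, and $m/n\to a_1+\cdots+a_k$ then yield ${\rm mdim}^{\bf a}(X_1,T_1)\le(a_1+\cdots+a_k)\,{\rm mdim}(X_1,T_1)$ directly. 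With either repair your argument is complete.
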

The remainder of this paper is organized as follows. In Section \ref{2}, we recall  definitions and properties of the mean dimension. In Section \ref{3}, we define the weighted mean dimension and weighted metric mean dimension,  investigate the relationship between the two notions, and show that finite weighted topological entropy implies  the zero weighted mean dimension. In Section \ref{4}, we introduce the small  boundary property, and devote to proving that the SBP  yields the zero wighted mean dimension. In Section \ref{5}, we  present an example.

\section{Preliminaries}\label{2}
Next we recall some basic facts about mean dimension. These notions and terminologies can be found in \cite{Coo15} and \cite{LWE}. Let $X$ be a compact metric space and  $\alpha$ be a finite open cover of  $X$, we call that a cover $\beta$ refines $\alpha$$(\beta\succ\alpha)$, if every member of $\beta$ is a subset of some member of $\alpha$.
\begin{definition}
If $\alpha$ is an open cover of $X$, we shall denote
\begin{center}
$ord(\alpha)=\max\limits_{x\in X}\sum\limits_{U\in\alpha}1_{U}(x)\ \text{and}\  D(\alpha)=\min\limits_{\beta\succ\alpha}ord(\beta),$
\end{center}
where $\beta$ runs over all finite open covers of $X$ refining $\alpha$.
\end{definition}

\begin{definition}
Let $X$ be a topological space. The topological dimension $dim(X)$ of $X$ is the quantity defined by
\begin{center}
$dim(X)=\sup\limits_{\alpha}D(\alpha)$,
\end{center}
where $\alpha$ runs over all finite covers of $X$.
\end{definition}
\begin{definition}\label{md}
If $(X, T )$ is a dynamical system, then the mean dimension of $(X, T)$, denoted by $\text{mdim}(X, T)$, is defined by
$${\rm  mdim}(X,T)=\sup\limits_{\alpha}\lim\limits_{n\rightarrow \infty} \frac{1}{n} D(\bigvee\limits_{i=0}^{n-1}T^{-i}\alpha), $$
where $\alpha$ runs over all finite open covers of $X$.
\end{definition}
\begin{definition}
	Let $X $ and $Y$ be topological spaces. Let $\alpha$  be a finite open cover of $X$.  A continuous map $f: X \rightarrow Y$ will be called $\alpha$-compatible if it is possible to find a finite open cover $\beta$ of $f(X)$ such that $f^{-1}(\beta)\succ \alpha.$ We will use the notation $f \succ \alpha $ to denote that $f$ is $\alpha$-compatible.
\end{definition}
\begin{proposition}
Let $X$ and $Y$ be topological spaces and let $f:X\rightarrow Y$ be a continuous map. Let $\beta$ be a finite open cover of $Y$. Then one has
\begin{center}
$D(f^{-1}(\beta))\leq D(\beta)$.
\end{center}
\end{proposition}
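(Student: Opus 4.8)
The plan is to exhibit an explicit refinement of $f^{-1}(\beta)$ whose order is controlled by $D(\beta)$, and then invoke the definition of $D$ as a minimum over refinements. First I would choose, using $D(\beta)=\min_{\gamma\succ\beta}ord(\gamma)$, a finite open cover $\gamma$ of $Y$ that refines $\beta$ and realizes this minimum, so that $ord(\gamma)=D(\beta)$. The candidate refinement of $f^{-1}(\beta)$ will then be the pullback cover $f^{-1}(\gamma)=\{f^{-1}(W):W\in\gamma\}$, which is a finite open cover of $X$ by continuity of $f$.

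Next I would verify the two properties that make this candidate work. For the refinement property $f^{-1}(\gamma)\succ f^{-1}(\beta)$: any member of $f^{-1}(\gamma)$ has the form $f^{-1}(W)$ with $W\in\gamma$, and since $\gamma\succ\beta$ there is some $V\in\beta$ with $W\subseteq V$; applying $f^{-1}$ preserves inclusions, so $f^{-1}(W)\subseteq f^{-1}(V)$, and $f^{-1}(V)$ is a member of $f^{-1}(\beta)$. For the order estimate, I would observe that for each $x\in X$ a member $f^{-1}(W)$ contains $x$ precisely when $W$ contains $f(x)$; hence $\sum_{U\in f^{-1}(\gamma)}1_{U}(x)\leq\sum_{W\in\gamma}1_{W}(f(x))\leq ord(\gamma)$, where the first inequality allows for the possibility that distinct members of $\gamma$ collapse to the same preimage. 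Taking the maximum over $x$ yields $ord(f^{-1}(\gamma))\leq ord(\gamma)$.

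Combining these, $f^{-1}(\gamma)$ is a finite open cover of $X$ refining $f^{-1}(\beta)$, so by the definition of $D$ one gets $D(f^{-1}(\beta))\leq ord(f^{-1}(\gamma))\leq ord(\gamma)=D(\beta)$, which is exactly the claimed inequality.

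There is no serious obstacle here; the argument is essentially bookkeeping about covers. The only point deserving care is that $f$ is assumed neither surjective nor injective. Non-injectivity means several sets $W$ may share a preimage, but since $ord$ counts distinct members of a cover this can only lower the order of $f^{-1}(\gamma)$; non-surjectivity means $f(x)$ ranges only over $f(X)\subseteq Y$, so $\max_{x\in X}\sum_{W\in\gamma}1_{W}(f(x))\leq\max_{y\in Y}\sum_{W\in\gamma}1_{W}(y)=ord(\gamma)$. Both subtleties thus act in the favorable direction and preserve the inequality.
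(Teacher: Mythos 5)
Your proof is correct, and it is exactly the standard argument: the paper itself states this proposition without proof, recalling it as a known preliminary from \cite{Coo15} and \cite{LWE}, where it is proved by precisely this pullback construction. Choosing an optimal refinement $\gamma\succ\beta$ with $ord(\gamma)=D(\beta)$, noting $f^{-1}(\gamma)\succ f^{-1}(\beta)$ and $ord(f^{-1}(\gamma))\leq ord(\gamma)$ via $1_{f^{-1}(W)}(x)=1_{W}(f(x))$, and then invoking the definition of $D$ as a minimum over refinements is the canonical route, and your attention to non-injectivity and non-surjectivity (both of which only help the inequality) is exactly the right bookkeeping.
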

\begin{proposition}\label{ad}
 Let $\alpha$ and $\beta$ be finite open covers of $X$, then
\begin{center}
$D(\alpha\vee\beta)\leq D(\alpha)+D(\beta)$.
\end{center}
\end{proposition}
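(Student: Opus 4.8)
The plan is to pass from covers to continuous maps into polyhedra via the nerve construction, and then to exploit that the covering dimension of a product of polyhedra is subadditive. The key auxiliary fact I would establish first is that an $\alpha$-compatible map controls $D(\alpha)$ from above: \emph{if $h\colon X\to Y$ is $\alpha$-compatible, then $D(\alpha)\le \dim(h(X))$.} Indeed, let $\eta$ be a finite open cover of $h(X)$ with $h^{-1}(\eta)\succ\alpha$. Since refinement is transitive, $D$ is monotone in the sense that the finer cover has the larger value, so $D(\alpha)\le D(h^{-1}(\eta))$; combining this with the preceding Proposition $D(h^{-1}(\eta))\le D(\eta)$ and with $D(\eta)\le\dim(h(X))$ (immediate from the definition of topological dimension), the claim follows.

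Conversely I would record that $D(\alpha)$ is realized by a compatible map into a low-dimensional polyhedron. Choosing a finite open cover $\alpha'\succ\alpha$ with $ord(\alpha')=D(\alpha)$ and letting $N(\alpha')$ be its nerve, the canonical map $f\colon X\to N(\alpha')$ associated with a partition of unity subordinate to $\alpha'$ is $\alpha'$-compatible, hence $\alpha$-compatible, and $N(\alpha')$ is a polyhedron of dimension at most $D(\alpha)$ (its dimension is $ord(\alpha')-1$ in the standard nerve convention). Doing the same for $\beta$ yields a $\beta$-compatible map $g\colon X\to N(\beta')$ into a polyhedron of dimension at most $D(\beta)$.

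With these maps in hand I would form the product map $(f,g)\colon X\to N(\alpha')\times N(\beta')$ and check that it is $(\alpha\vee\beta)$-compatible: if $\eta_1,\eta_2$ witness $\alpha$- and $\beta$-compatibility of $f$ and $g$, then the cover of the image by the sets $p_1^{-1}(A)\cap p_2^{-1}(B)$ (with $A\in\eta_1$, $B\in\eta_2$ and $p_1,p_2$ the two projections) pulls back under $(f,g)$ to $f^{-1}(\eta_1)\vee g^{-1}(\eta_2)$, which refines $\alpha\vee\beta$. Applying the auxiliary fact to $(f,g)$ then gives $D(\alpha\vee\beta)\le \dim\bigl(N(\alpha')\times N(\beta')\bigr)$.

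The crux of the argument, and the step I expect to be the main obstacle, is the subadditivity of dimension under products of polyhedra, $\dim(P\times Q)\le\dim P+\dim Q$, which yields $\dim\bigl(N(\alpha')\times N(\beta')\bigr)\le D(\alpha)+D(\beta)$ and closes the proof. It is worth noting that the naive alternative of simply taking the common refinement $\alpha'\vee\beta'$ of optimal refinements does \emph{not} work: the multiplicity of $\alpha'\vee\beta'$ is bounded only by the \emph{product} $ord(\alpha')\,ord(\beta')$, which would give $D(\alpha\vee\beta)\le D(\alpha)\,D(\beta)$ instead of the desired sum. This is precisely why one must route the argument through the additive behavior of dimension on products of nerves rather than through the combinatorics of the covers directly.
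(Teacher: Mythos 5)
The paper does not actually prove this proposition --- it is quoted as a known fact from \cite{Coo15} and \cite{LWE} --- and your argument is correct and essentially the standard one from those sources: realize $D(\alpha)$ and $D(\beta)$ by compatible maps into low-dimensional polyhedra via nerves, check that the product map is $(\alpha\vee\beta)$-compatible, and invoke $\dim(P\times Q)\le \dim P+\dim Q$, which for polyhedra follows by triangulating the product. Note that your auxiliary fact and your nerve construction are exactly the two directions of the paper's Proposition~\ref{2.4}, so the argument compresses to: take $\alpha$- and $\beta$-compatible maps $f\colon X\to K$, $g\colon X\to L$ with $\dim K=D(\alpha)$, $\dim L=D(\beta)$, verify (as you do) that $(f,g)$ is $(\alpha\vee\beta)$-compatible, and conclude $D(\alpha\vee\beta)\le\dim(K\times L)\le D(\alpha)+D(\beta)$.
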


\begin{proposition}\label{b}
	If $X$ is compact, $f: X \rightarrow Y$ a continuous function such that for  every $y \in Y$, $f^{-1}(y)$ is a subset of some $U \in \alpha$, then $f$ is $\alpha$-compatible.
\end{proposition}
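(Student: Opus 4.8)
The plan is to produce, for each point $y$ in the image $f(X)$, an open neighborhood whose $f$-preimage is contained in a single member of $\alpha$, and then to invoke compactness of $f(X)$ to extract a finite subcover, which will serve as the required cover $\beta$. In this way the fibrewise hypothesis is upgraded to an honest open cover witnessing $f\succ\alpha$.

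First I would fix $y\in f(X)$. By hypothesis the fiber $f^{-1}(y)$ is contained in some $U_y\in\alpha$. Since $U_y$ is open, its complement $X\setminus U_y$ is closed in the compact space $X$, hence compact; as $f$ is continuous, $f(X\setminus U_y)$ is compact and therefore closed in $Y$ (here $Y$ is Hausdorff, which is automatic in our setting since the $X_i$ are metric). I would then set $V_y:=Y\setminus f(X\setminus U_y)$, an open subset of $Y$. The point $y$ lies in $V_y$: otherwise $y=f(x)$ for some $x\in X\setminus U_y$, forcing $x\in f^{-1}(y)\subseteq U_y$, a contradiction.

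The key verification is that $f^{-1}(V_y)\subseteq U_y$. Indeed, if $f(x)\in V_y$ then $f(x)\notin f(X\setminus U_y)$, so $x$ cannot lie in $X\setminus U_y$, i.e.\ $x\in U_y$. This is the heart of the argument: a tube-lemma-type step that converts the fibrewise containment into an open containment, and it is exactly where compactness of $X$ is used, namely to guarantee that $f(X\setminus U_y)$ is closed so that its complement is an open neighborhood of $y$. I expect this to be the only delicate point; everything else is routine.

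Finally, $\{V_y\cap f(X)\}_{y\in f(X)}$ is an open cover of $f(X)$, which is compact as the continuous image of the compact space $X$. Extracting a finite subcover $\beta=\{V_{y_1}\cap f(X),\dots,V_{y_m}\cap f(X)\}$, each member satisfies $f^{-1}(V_{y_j})\subseteq U_{y_j}\in\alpha$, so every element of $f^{-1}(\beta)$ is contained in a member of $\alpha$, i.e.\ $f^{-1}(\beta)\succ\alpha$. Thus $\beta$ is a finite open cover of $f(X)$ witnessing that $f$ is $\alpha$-compatible, which completes the proof.
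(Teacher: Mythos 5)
Your proof is correct and follows essentially the same route as the paper's source for this fact: the paper states Proposition \ref{b} without proof, recalling it from \cite{Coo15} and \cite{LWE}, and the argument there is precisely your tube-lemma-style construction $V_y = Y\setminus f(X\setminus U_y)$ (open because $f(X\setminus U_y)$ is the image of a compact set), the verification $f^{-1}(V_y)\subseteq U_y$, and a finite-subcover extraction from the compact set $f(X)$. Your explicit remark that $Y$ must be Hausdorff for compact images to be closed is a legitimate point the statement glosses over, and it is indeed harmless here since every target space used in the paper (cubes $[0,1]^M$, Euclidean spaces, skeleta) is metric.
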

\begin{proposition}\label{2.4}
If $\alpha$ is an open cover of $X$, then
$$D(\alpha)\leq k $$
iff there is an $\alpha$-compatible continuous function $f:X\rightarrow K$ where $K$ has topological dimension $k$.
\end{proposition}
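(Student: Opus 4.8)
The plan is to prove the two implications separately. The reverse implication (existence of an $\alpha$-compatible map forces $D(\alpha)\le k$) is a short chain of monotonicity estimates assembled from the propositions already established. The forward implication (from $D(\alpha)\le k$ produce the map) is the substantive part: I would realize $X$ inside the nerve of a well-chosen refinement of $\alpha$ and take $K$ to be that nerve.

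For the reverse direction, suppose $f:X\to K$ is $\alpha$-compatible with $dim(K)=k$. By definition there is a finite open cover $\beta$ of $f(X)$ with $f^{-1}(\beta)\succ\alpha$. First I would record the monotonicity of $D$ under refinement: if $\gamma\succ\alpha$ then every refinement of $\gamma$ refines $\alpha$, so $D(\alpha)=\min_{\delta\succ\alpha}ord(\delta)\le\min_{\delta\succ\gamma}ord(\delta)=D(\gamma)$; applying this to $\gamma=f^{-1}(\beta)$ gives $D(\alpha)\le D(f^{-1}(\beta))$. Combining this with the earlier proposition $D(f^{-1}(\beta))\le D(\beta)$, with $D(\beta)\le dim(f(X))$ (since $\beta$ is one of the covers entering the supremum defining $dim(f(X))$), and with the monotonicity of topological dimension under the subspace $f(X)\subseteq K$, I obtain $D(\alpha)\le D(f^{-1}(\beta))\le D(\beta)\le dim(f(X))\le dim(K)=k.$

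For the forward direction, assume $D(\alpha)\le k$ and choose a finite open cover $\beta=\{V_1,\dots,V_m\}$ with $\beta\succ\alpha$ and $ord(\beta)\le k$, which exists by the definition of $D(\alpha)$ as a minimum. Let $K=|N(\beta)|$ be the geometric realization of the nerve of $\beta$, whose vertices are the $V_j$ and whose simplices correspond to subfamilies with nonempty intersection. Since $ord(\beta)\le k$, no point lies in more than $k$ members of $\beta$, so every simplex of $N(\beta)$ has at most $k$ vertices and hence $dim(K)\le k$. Choosing a partition of unity $\{\phi_j\}$ subordinate to $\beta$ (available since $X$ is compact metric, hence normal), I define the canonical map $f(x)=\sum_j\phi_j(x)e_j\in K$, where $e_j$ is the vertex corresponding to $V_j$; this is continuous and well defined because the indices $j$ with $\phi_j(x)>0$ satisfy $x\in V_j$ and therefore span a genuine simplex. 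To see that $f$ is $\alpha$-compatible I would invoke Proposition \ref{b}: for any $y\in K$ pick an index $j$ with positive $e_j$-coordinate, so that $f^{-1}(y)\subseteq\{\phi_j>0\}\subseteq V_j\subseteq U$ for some $U\in\alpha$ (using $\beta\succ\alpha$); thus every fiber lies in a member of $\alpha$ and Proposition \ref{b} yields $f\succ\alpha$. Taking this $f$ and $K$ completes the construction.

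The main obstacle is the forward direction, namely verifying simultaneously that the canonical nerve map lands in $K$, that $K$ has dimension at most $k$, and that $f$ is $\alpha$-compatible; the dimension count is where one must be careful, since it rests on identifying $ord(\beta)\le k$ with the bound on the number of vertices of the simplices of $N(\beta)$, together with the standard fact that a finite polyhedron all of whose simplices have at most $k$ vertices has topological dimension at most $k$ in the convention of this paper. Finally, if one insists on $dim(K)=k$ exactly rather than $dim(K)\le k$, I would replace $K$ by the disjoint union of $|N(\beta)|$ with a fixed $k$-dimensional polyhedron and keep the same $f$, which alters neither continuity nor $\alpha$-compatibility.
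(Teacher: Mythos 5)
Your proof is correct. The paper itself does not prove this proposition (it is recalled in Section~\ref{2} from \cite{Coo15} and \cite{LWE}), and your argument is essentially the standard one from those sources: the reverse implication via the monotonicity chain $D(\alpha)\le D(f^{-1}(\beta))\le D(\beta)\le \dim(f(X))\le \dim(K)=k$, and the forward implication via the partition-of-unity map into the nerve of a refinement realizing $D(\alpha)\le k$. You also correctly navigated the paper's convention that $ord(\alpha)=\max_{x\in X}\sum_{U\in\alpha}1_U(x)$ carries no ``$-1$'', so that a complex whose simplices have at most $k$ vertices has dimension $k$ in this paper's sense, and your disjoint-union fix to achieve $\dim(K)=k$ exactly is a legitimate (if pedantic) touch.
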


\section{Proof of  Theorem \ref{thm1}}\label{3}
Firstly, we  introduce the concepts of  weighted mean dimension and weighted metric mean dimension.
Let $k\geq 2$. Assume that $(X_{i},d_{i}), i=1,2,\cdots,k$, are compact metric spaces, and $(X_{i},T_{i})$ are topological dynamical systems. Moreover, assume that for each $1\leq i\leq k-1,(X_{i+1},T_{i+1})$ is a factor of $(X_{i},T_{i})$  with a factor map $\pi_{i} : X_{i}\to X_{i+1}$. Fix ${\bf a}=(a_{1},\cdots,a_{k})\in \mathbb{R}^{k}$ with $a_{1}>0$ and $a_{i}\geq 0$ for $2\leq i \leq k$. Let $\alpha_i$ be a finite open cover of $(X_i,T_i)$ for  $ 1\leq i \leq k$. We define the weighted mean dimension as follows.
\begin{definition}\label{topological}
${\rm mdim}^{\bf a}(X_1,T_1)=\sup\limits_{\alpha_1,\cdot\cdot\cdot,\alpha_k}\limsup\limits_{n\rightarrow\infty} \dfrac{1}{n}{ D(\bigvee\limits_{i=1}^{ k}\bigvee\limits_{j=0}^{\lceil (a_{1}+a_{2}+....+a_{i})n\rceil-1 }T_{1}^{-j}\tau_{i-1}^{-1}\alpha_i)}$,
where $ \alpha_{i} $ is a finite open cover of $X_i,1\leq i \leq k$.
\end{definition}
\begin{remark}
 When $a_{1}=1, a_{i}=0$ $ (i\geq 2)$, the weighted mean dimension recovers the classic mean dimension. Our interest is on the general  case that $a_{i}\neq 0 $ $(1\leq i \leq k )$.
 It is worth pointing out that weighted mean dimension is  different from mean dimension in the case of $a_{i}\neq 0 $ $(2\leq i \leq k )$. In fact, we know the sequence
$\{D(\bigvee\limits_{i=0}^{n-1}T^{-i}\alpha)\}_{n\geq 1}$  is subadditive, however,  the sequence
 $$\{D(\bigvee\limits_{i=1}^{ k}\bigvee\limits_{j=0}^{\lceil (a_{1}+a_{2}+....+a_{i})n\rceil-1 }T_{1}^{-j}\tau_{i-1}^{-1}\alpha_i)\}_{n\geq 1}$$
 is not subadditive, then  the limit in the definition  \ref{topological} doesn't  exist in case of  $a_{i}\neq 0$ for some $2 \leq i \leq k.$
\end{remark}

\begin{proposition}
Let $X_1$ be a compact metric space and $T_1:X_1\rightarrow X_1$ a continuous map. Then one has ${\rm mdim}^{\bf a}(X_1,T_{1}^{n})=n\cdot\text{\rm mdim}^{\bf a}(X_1,T_1)$ for every positive integer $n$.
\end{proposition}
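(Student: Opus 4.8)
The plan is to prove the two inequalities separately and combine them. Throughout, fix a $k$-tuple of finite open covers with $\alpha_i$ a cover of $X_i$, write $b_i=a_1+\cdots+a_i$ (so $b_i\ge a_1>0$), and abbreviate the weighted orbit cover of $(X_1,T_1)$ at scale $N$ by
$$\Phi_N=\bigvee_{i=1}^{k}\bigvee_{j=0}^{\lceil b_iN\rceil-1}T_1^{-j}\tau_{i-1}^{-1}\alpha_i.$$
First I would record the structural facts that make the weighted machinery pass to powers: $(X_{i+1},T_{i+1}^n)$ is a factor of $(X_i,T_i^n)$ through the same $\pi_i$, the map $\tau_{i-1}$ still intertwines $T_1^n$ and $T_i^n$, and the intertwining relation $T_i^{l}\tau_{i-1}=\tau_{i-1}T_1^{l}$ gives the pullback identity $\tau_{i-1}^{-1}T_i^{-l}=T_1^{-l}\tau_{i-1}^{-1}$. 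Hence the weighted orbit covers of $(X_1,T_1^n)$ can be rewritten entirely in terms of $T_1$ and the covers $\alpha_i$.

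For ${\rm mdim}^{\bf a}(X_1,T_1^n)\le n\,{\rm mdim}^{\bf a}(X_1,T_1)$ I would feed the same covers $\alpha_i$ into $(X_1,T_1^n)$. Its scale-$M$ orbit cover uses only the exponents $j=nm$ with $0\le m\le\lceil b_iM\rceil-1$; since $n(\lceil b_iM\rceil-1)\le\lceil b_inM\rceil-1$, each such factor already occurs in $\Phi_{nM}$, so $\Phi_{nM}$ refines it. As $D$ is monotone under refinement, the scale-$M$ cover has $D\le D(\Phi_{nM})$; dividing by $M$, passing to $\limsup$ (a subsequence $\limsup$ is at most the full one), and taking the supremum over covers gives this direction.

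The reverse inequality is the substantive one. For $(X_1,T_1^n)$ I would instead use the finer covers $\beta_i=\bigvee_{l=0}^{n-1}T_i^{-l}\alpha_i$ of $X_i$. By the pullback identity, $\tau_{i-1}^{-1}\beta_i=\bigvee_{l=0}^{n-1}T_1^{-l}\tau_{i-1}^{-1}\alpha_i$, so the scale-$M$ orbit cover of the $\beta_i$ telescopes into $\bigvee_{i}\bigvee_{j=0}^{n\lceil b_iM\rceil-1}T_1^{-j}\tau_{i-1}^{-1}\alpha_i$, which refines $\Phi_{nM}$ because $n\lceil b_iM\rceil\ge\lceil b_inM\rceil$. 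Monotonicity of $D$ then yields $\tfrac1M D(\text{scale-}M\text{ cover of }\beta)\ge n\cdot\tfrac{1}{nM}D(\Phi_{nM})$.

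The main obstacle is that, as the Remark notes, $\{D(\Phi_N)\}$ is not subadditive, so $\tfrac1N D(\Phi_N)$ may fail to converge and the $\limsup$ along multiples of $n$ could a priori be strictly smaller than the full $\limsup$. I would remove this gap with a bounded-increment estimate: passing from $\Phi_N$ to $\Phi_{N+1}$ adjoins at most $\sum_i(\lceil b_i(N+1)\rceil-\lceil b_iN\rceil)\le\sum_i(\lceil b_i\rceil+1)$ new factors $T_1^{-j}\tau_{i-1}^{-1}\alpha_i$, each with $D(T_1^{-j}\tau_{i-1}^{-1}\alpha_i)\le D(\alpha_i)$ by the pullback proposition of Section \ref{2} together with the subadditivity Proposition \ref{ad}; hence $0\le D(\Phi_{N+1})-D(\Phi_N)\le C$ for a constant $C=C(\alpha_i,{\bf a})$. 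Writing $N=nM+r$ with $0\le r<n$ and telescoping over at most $n$ steps gives $D(\Phi_N)\le D(\Phi_{nM})+nC$, so $\limsup_N\tfrac1N D(\Phi_N)=\limsup_M\tfrac{1}{nM}D(\Phi_{nM})$. Substituting this equality into the inequalities above and taking suprema over $\alpha_1,\ldots,\alpha_k$ delivers both bounds, and therefore the claimed equality ${\rm mdim}^{\bf a}(X_1,T_1^n)=n\,{\rm mdim}^{\bf a}(X_1,T_1)$.
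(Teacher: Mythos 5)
Your proposal is correct, and its two central comparisons are exactly those in the paper's own proof: for ${\rm mdim}^{\bf a}(X_1,T_1^n)\leq n\,{\rm mdim}^{\bf a}(X_1,T_1)$ you compare the scale-$m$ weighted cover of $(X_1,T_1^n)$ with $\Phi_{mn}$ via $n(\lceil b_im\rceil-1)\leq\lceil b_imn\rceil-1$, and for the reverse inequality you pass to $\beta_i=\bigvee_{l=0}^{n-1}T_i^{-l}\alpha_i$ and use $\lceil b_imn\rceil\leq n\lceil b_im\rceil$, precisely as the paper does. Where you genuinely go beyond the paper is the bounded-increment lemma $D(\Phi_{N+1})\leq D(\Phi_N)+C$ and the resulting identity $\limsup_{N\to\infty}\tfrac{1}{N}D(\Phi_N)=\limsup_{M\to\infty}\tfrac{1}{nM}D(\Phi_{nM})$. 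The paper neither states nor proves this: its argument for the direction $n\,{\rm mdim}^{\bf a}(X_1,T_1)\leq{\rm mdim}^{\bf a}(X_1,T_1^n)$ ends with an inequality whose left-hand side is a $\limsup$ taken only along the subsequence of multiples of $n$, and this is silently upgraded to the full $\limsup$ appearing in Definition \ref{topological}. Since the paper's own Remark emphasizes that the sequence $\{D(\Phi_N)\}_{N\geq1}$ is \emph{not} subadditive, so that the limit need not exist, a subsequence $\limsup$ could a priori be strictly smaller than the full one, and this step really does require justification. Your telescoping estimate (each increment of $N$ adjoins at most $\sum_{i=1}^{k}\lceil b_i\rceil$ factors of the form $T_1^{-j}\tau_{i-1}^{-1}\alpha_i$, each with $D$-value at most $D(\alpha_i)$ by the pullback inequality, and one then applies Proposition \ref{ad}) supplies exactly the missing step, with $C$ finite because the covers are finite. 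In short, your write-up follows the paper's route but is more complete than it: the extra lemma is what makes the second inequality rigorous, and it would be a worthwhile addition to the paper's proof.
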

\begin{proof}
	For $1\leq i \leq k$, let $\alpha_{i}$ be a finite open cover of $X_{i}$. Set
	\begin{align*}
	w(\{{\alpha_i}\}_{i=1}^{ k},T_1,n)=\bigvee\limits_{i=1}^{ k}\bigvee\limits_{j=0}^{\lceil (a_{1}+a_{2}+....+a_{i})n\rceil-1 }T_{1}^{-j}\tau_{i-1}^{-1}\alpha_i.
	\end{align*}
    Since $\lceil (a_{1}+ \cdots + a_{i})mn \rceil-1\geq n(\lceil (a_{1}+\cdots + a_{i})m\rceil-1)$	for every integer $m\geq 1$,	we have the open cover
	\begin{align*}
	w(\{{\alpha_i}\}_{i=1}^{ k},T_1,mn)
	=\bigvee_{i=1}^{k}\bigvee\limits_{j=0}^{\lceil (a_1+a_2+....+a_i)(mn)\rceil-1}T_1^{-j}\tau_{i-1}^{-1}\alpha_i
	\end{align*}
	is finer than the open cover
	\begin{align*}
	w(\{{\alpha_i}\}_{i=1}^{k},T_1^n,m)=\bigvee\limits_{i=1}^{k}\bigvee\limits_{j=0}^{\lceil(a_{1}+a_{2}+....+a_{i})m\rceil-1 }(T_{1}^{n})^{-j}\tau_{i-1}^{-1}\alpha_i.
	\end{align*}	
    It follows that
	\begin{align*}
	D(\bigvee_{i=1}^{k}\bigvee\limits_{j=0}^{\lceil (a_1+a_2+....+a_i)(mn)\rceil-1}T_1^{-j}\tau_{i-1}^{-1}\alpha_i)\geq D(\bigvee\limits_{i=1}^{k}\bigvee\limits_{j=0}^{\lceil(a_{1}+a_{2}+....+a_{i})m\rceil-1 }(T_{1}^{n})^{-j}\tau_{i-1}^{-1}\alpha_i).
	\end{align*}
	We deduce that
	\begin{align}\label{n}
	n \cdot \text{mdim}^{\bf a}(X_1,T_1)\geq \text{mdim}^{\bf a}(X_1,T_1^n).
	\end{align}	
On the other hand, by $ \lceil (a_{1}+a_{2}+....+a_{i})nm\rceil-1\leq \lceil (a_{1}+a_{2}+....+a_{i})m\rceil n-1 $, we obtain
\begin{align*}
&\bigvee_{j=0}^{\lceil (a_{1}+a_{2}+....+a_{i})nm\rceil-1 }T_{1}^{-j}\tau_{i-1}^{-1}\alpha_i\prec \bigvee_{j=0}^{\lceil (a_{1}+a_{2}+....+a_{i})m\rceil n-1 }T_{1}^{-j}\tau_{i-1}^{-1}\alpha_i
\end{align*}
for each $1 \leq i \leq k$.
Let $\beta_{i}=\bigvee\limits_{j=0}^{n-1}T_i^{-j}\alpha_i$, then
\begin{align*}
w(\{{\alpha_i}\}_{i=1}^{ k},T_1,mn)\prec w(\{{\beta_i}\}_{i=1}^{ k},(T_1)^n,m).
\end{align*}
Moreover,
\begin{align*}
n \dfrac{1}{mn}D(\bigvee_{i=1}^{k}\bigvee\limits_{j=0}^{\lceil (a_1+a_2+....+a_i)(mn)\rceil-1}T_1^{-j}\tau_{i-1}^{-1}\alpha_i)\leq \dfrac{1}{m}D(\bigvee_{i=1}^{k}\bigvee\limits_{j=0}^{\lceil (a_1+a_2+....+a_i)m\rceil-1}(T_1^{n})^{-j}\beta_i).
\end{align*}
Then
\begin{align}\label{m}
n\cdot \text{mdim}^{\bf a}(X_1,T_1)\leq \text{mdim}^{\bf a}(X_1,T_1^n).
\end{align}
Inequalities ($\ref{n}$) and ($\ref{m}$) imply that
\begin{align*}
n\cdot \text{mdim}^{\bf a}(X_1,T_1)= \text{mdim}^{\bf a}(X_1,T_1^n).
\end{align*}

\end{proof}

\begin{proposition}
Let $\text {dim}(X_1)<\infty$. Then $\text{\rm mdim}^{\bf a}(X_1,T_1)=0$.
\end{proposition}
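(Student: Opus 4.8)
The plan is to exploit the single most important structural feature of the quantity inside the supremum: for each fixed choice of covers and each $n$, the join
$$W_n := \bigvee_{i=1}^{k}\bigvee_{j=0}^{\lceil(a_1+\cdots+a_i)n\rceil-1} T_1^{-j}\tau_{i-1}^{-1}\alpha_i$$
is itself a finite open cover of $X_1$, so its $D$-value is controlled directly by $\dim(X_1)$, with no need to estimate the individual factors or to iterate subadditivity.

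First I would fix finite open covers $\alpha_i$ of $X_i$ for $1 \le i \le k$ and verify that $W_n$ is a legitimate finite open cover of $X_1$: each $\tau_{i-1}\colon X_1 \to X_i$ is continuous (with $\tau_0 = \mathrm{id}$), so $\tau_{i-1}^{-1}\alpha_i$ is a finite open cover of $X_1$; applying the continuous self-map $T_1^{-j}$ and then forming a finite join preserves this property. Consequently, by the very definition of topological dimension as $\dim(X_1)=\sup_{\alpha} D(\alpha)$ with $\alpha$ ranging over finite open covers of $X_1$, we obtain $D(W_n)\le \dim(X_1)$ for every $n$.

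Dividing by $n$ then gives $0 \le \tfrac{1}{n}D(W_n) \le \tfrac{1}{n}\dim(X_1)$, and since $\dim(X_1)<\infty$ the right-hand side tends to $0$; hence $\limsup_{n\to\infty}\tfrac{1}{n}D(W_n)=0$ for each fixed family $\{\alpha_i\}_{i=1}^{k}$. Taking the supremum over all choices of $\alpha_1,\dots,\alpha_k$ yields $\mathrm{mdim}^{\bf a}(X_1,T_1)=0$, which is the assertion.

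The one point I would emphasise — and the only place where the argument could go wrong — is that the naive route via the subadditivity inequality $D(\alpha\vee\beta)\le D(\alpha)+D(\beta)$ of Proposition \ref{ad} is a trap here: bounding each of the roughly $n\sum_{i}(a_1+\cdots+a_i)$ factors separately by $\dim(X_1)$ produces only a bound of order $n$, which after division by $n$ leaves a positive constant rather than $0$. The genuine content is therefore the observation that the entire refinement $W_n$ must be handled as a single open cover of $X_1$, so that the definition of $\dim(X_1)$ applies to it at once; after that there is essentially no remaining obstacle.
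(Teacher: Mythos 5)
Your proof is correct and follows essentially the same route as the paper: since the join $W_n$ is itself a finite open cover of $X_1$, the definition $\dim(X_1)=\sup_{\alpha}D(\alpha)$ gives $D(W_n)\le\dim(X_1)$ uniformly in $n$, and dividing by $n$ kills the limit. Your closing warning about subadditivity is a sensible aside but not needed, as the paper likewise bounds the whole join at once rather than factor by factor.
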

\begin{proof}
For every finite open cover $\alpha_i$ of $X_i$, $i=1,...,k$, we have
 \begin{align*}
 D(\bigvee_{i=1}^{k}\bigvee\limits_{j=0}^{\lceil (a_1+a_2+....+a_i)n\rceil-1}T_1^{-j}\tau_{i-1}^{-1}\alpha_i)\leq \text {dim}(X_1).
 \end{align*}
 As $\text {dim}(X_1)<\infty$, we deduce that
\begin{align*}
\limsup\limits_{n\rightarrow\infty}\dfrac{1}{n}D(\bigvee_{i=1}^{k}\bigvee\limits_{j=0}^{\lceil (a_1+a_2+....+a_i)n\rceil-1}T_1^{-j}\tau_{i-1}^{-1}\alpha_i)=0.
\end{align*}
Thus, we have that
\begin{align*}
\text{mdim}^{\bf a}(X_1,T_1)=0.
\end{align*}
\end{proof}

 We define a weighted version of metric mean dimension.
For an open cover $\alpha$, define the mesh of $\alpha$ according to the metric $d_{n}^{\bf a}$  by
\begin{center}
$mesh(\alpha ,d_{n}^{\bf a})=\max\limits_{U\in\alpha} diam(U)$.
\end{center}
Set
\begin{align}\label{s}
S^{{\bf a}}( X_1,\epsilon)=\limsup\limits_{n\rightarrow\infty}\inf\limits_{mesh(\alpha,d_{n}^{\bf a})<\epsilon} \frac{1}{n}\log|\alpha|.
\end{align}
It is obvious that
$S^{{\bf a}}(X_1,\epsilon )$ is monotone nondecreasing as $\epsilon\rightarrow 0$. 

\begin{definition}\label{metric}
We define the weighted metric mean dimension of $X_1$,
\begin{align}\label{S}
{\rm mdim}_{M}^{\bf a}(X_1,d)=\liminf \limits_{\epsilon\rightarrow 0} \frac{S^{{\bf a}}( X_1,\epsilon)}{|\log\epsilon|}.
\end{align}
\end{definition}
 Recall how the  weighted topological entropy is defined for a dynamical system $X_{1}$. According to Definition \ref{sc}, set
$$r_{n}^{\bf a}(X_1, \epsilon)=\min\{{|F|:F\subset X_1\ \text{is}\ ({\bf a},n,\epsilon)\text{-spanning set}}\},$$
\begin{equation*}
r^{\bf a}(X_1,\epsilon)= \limsup\limits_{n \rightarrow \infty}\frac{1}{n}\log r_{n}^{\bf a}(X_{1},\epsilon),
\end{equation*}
and
\begin{center}
$h^{\bf a}(X_1)=\lim\limits_{\epsilon\rightarrow 0}r^{\bf a}(X_1,\epsilon )$ .
\end{center}
 Notice that
\begin{center}
$ r^{\bf a}(X_1,\epsilon)\geq S^{{\bf a}}(X_1,2\epsilon)\geq r^{\bf a}(X_1,2\epsilon)$.
\end{center}
 Thus (\ref{S}) is equivalent to
\begin{center}
$ \text {mdim}^{\bf a}_{M}(X_1,d)=\liminf\limits_{\epsilon\rightarrow 0} \frac{r^{\bf a}(X_1,\epsilon)}{|\log\epsilon|}$.
\end{center}
\begin{proposition}
If ${\rm mdim}^{\bf a}_{M}(X_{1},d)\neq 0$, then $h^{\bf a}(X_{1})=\infty.$
\end{proposition}
\begin{corollary}
If $X_{1}$ has finite weighted topological entropy, then ${\rm mdim}^{\bf a}_{M}(X_{1},d)= 0$.
\end{corollary}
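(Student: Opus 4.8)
The plan is to observe that this Corollary is precisely the contrapositive of the preceding Proposition, so it follows at once; for completeness I would also record the short direct argument. The crucial point is that the function $\epsilon\mapsto r^{\bf a}(X_1,\epsilon)$ is monotone nondecreasing as $\epsilon\to 0$, since a smaller scale $\epsilon$ can only force $({\bf a},n,\epsilon)$-spanning sets to have larger cardinality, hence can only increase $r_n^{\bf a}(X_1,\epsilon)$ and thus $r^{\bf a}(X_1,\epsilon)$. By definition $h^{\bf a}(X_1)=\lim_{\epsilon\to 0}r^{\bf a}(X_1,\epsilon)$, and the hypothesis asserts that this monotone limit is finite. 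Consequently $r^{\bf a}(X_1,\epsilon)\le h^{\bf a}(X_1)<\infty$ for every $\epsilon>0$; that is, $r^{\bf a}(X_1,\epsilon)$ is uniformly bounded by a fixed finite constant.

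Armed with this uniform bound, I would work from the spanning-set formulation of the weighted metric mean dimension recorded just after Definition \ref{metric}, namely ${\rm mdim}^{\bf a}_{M}(X_1,d)=\liminf_{\epsilon\to 0}\frac{r^{\bf a}(X_1,\epsilon)}{|\log\epsilon|}$, which the excerpt already justified through the sandwich $r^{\bf a}(X_1,\epsilon)\ge S^{\bf a}(X_1,2\epsilon)\ge r^{\bf a}(X_1,2\epsilon)$. For $0<\epsilon<1$ one has $|\log\epsilon|>0$, so
\[
0\le \frac{r^{\bf a}(X_1,\epsilon)}{|\log\epsilon|}\le \frac{h^{\bf a}(X_1)}{|\log\epsilon|}.
\]
As $\epsilon\to 0$ the denominator $|\log\epsilon|\to\infty$ while the numerator $h^{\bf a}(X_1)$ is fixed and finite, so the right-hand side tends to $0$. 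Passing to $\liminf_{\epsilon\to 0}$ and using nonnegativity of the quotient forces ${\rm mdim}^{\bf a}_{M}(X_1,d)=0$.

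I do not anticipate any genuine obstacle: the entire content reduces to the boundedness of $r^{\bf a}(X_1,\epsilon)$, which is immediate from the finiteness of $h^{\bf a}(X_1)$ together with monotonicity in $\epsilon$. The only step deserving a moment's attention is the choice of formulation — one should argue with the spanning-set expression for ${\rm mdim}^{\bf a}_{M}$ rather than the original mesh definition, but the two were identified in the discussion preceding the Proposition, so this presents no difficulty.
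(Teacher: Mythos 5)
Your proposal is correct and matches the paper's treatment: the paper states this corollary without proof, precisely as the contrapositive of the preceding Proposition (${\rm mdim}^{\bf a}_{M}(X_{1},d)\neq 0 \Rightarrow h^{\bf a}(X_{1})=\infty$), which is your first observation. Your supplementary direct argument — uniform boundedness of $r^{\bf a}(X_1,\epsilon)$ by the finite entropy via monotonicity, then division by $|\log\epsilon|\to\infty$ — is sound and is exactly the computation underlying that Proposition.
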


The next result is inspired in the Theorem 4.2 of \cite{LWE} and shows that the weighted metric mean dimension
is an upper bound for the weighted mean dimension.
\begin{proof}[Proof of Theorem \ref{thm1}]
  Assume that  for each $1 \leq i \leq  k-1$, $(X_{i+1}, T_{i+1})$ is a factor  of $(X_{i}, T_{i})$ with  a factor map $\pi_{i}: X_{i}\rightarrow X_{i+1}$. Define $\tau_{i}: X_{1} \rightarrow X_{i+1}$ by $\tau_{i}=\pi_{i} \circ \pi_{i-1}\circ \cdots \circ \pi_{0}$ for $i=0,1,\cdots, k-1$. As
  \begin{equation*}
  	\text{mdim}^{\bf a}(X_1,T_1)=\sup\limits_{\alpha_1,\cdots,\alpha_k}
  	\limsup\limits_{n\to\infty}\dfrac{1}{n}D(\bigvee\limits_{i=1}^k\bigvee\limits_{j=0}^{\lceil(a_1+a_2+\cdots+a_i)n\rceil-1}
  	T_1^{-j}\tau_{i-1}^{-1}\alpha_i),
  \end{equation*}
  it is sufficient to show that for any finite open cover $\alpha_i$ of $X_i,\ i=1,2,\cdots,k$,
  $$\limsup\limits_{n\to\infty}\dfrac{1}{n}D(\bigvee\limits_{i=1}^k\bigvee\limits_{j=0}^{\lceil(a_1+a_2+\cdots+a_i)n\rceil-1}
  T_1^{-j}\tau_{i-1}^{-1}\alpha_i)\leq \text {mdim}_{M}^{\bf a}(X_1,d).$$
Since $\alpha_i$ be a finite open cover of $X_i$, then $\tau_{i-1}^{-1}\alpha_i$ is a finite open cover of $X_1$.
We can refine $\tau_{i-1}^{-1}\alpha_i$ to be of the form
$$\tau_{i-1}^{-1}\alpha_i=\{U_{i_1},V_{i_1}\}\vee\{U_{i_2},V_{i_2}\}\vee\cdots\vee\{U_{i_{r_i}},V_{i_{r_i}}\},$$
where for $1\leq j \leq r_{i}$, $1\leq i \leq k$, $\{U_{i_j},V_{i_j}\}$ is an open cover of $X_1$ . Set $w_{i,j}:\ X_1\rightarrow[0,1]$ by
\begin{align}
  w_{i,j}(x)=\dfrac{d(x,X_1\backslash V_{i_j})}{d(x,X_1\backslash V_{i_j})+d(x,X_1\backslash U_{i_j})},\ 1\leq i \leq k,~1\leq j  \leq r_i.
\end{align}
Then $w_{i,j}$ is Lipschitz, and
\begin{align}\label{k}
\begin{split}
 U_{i_j} & =w_{i,j}^{-1}[0,1) \\
V_{i_j} & =w_{i,j}^{-1}(0,1] .
\end{split}
\end{align}
Let $C_L$ be a bound on the Lipschitz constants of all $w_{i,j}$.

For any $N$ define $F(N,\cdot):\ X_1\rightarrow
[0,1]^{\sum\limits_{i=1}^{k}r_i\lceil(a_1+a_2+\cdots+a_i)N\rceil}$ by
\begin{align*}
  F(N,x)=( & w_{1,1}(x),\cdots,w_{1,r_1}(x), \\
   & w_{1,1}(T_{1}x),\cdots,w_{1,r_{1}}(T_1x),\cdots \\
   & w_{1,1}(T_1^{\lceil a_1N\rceil-1}x),\cdots,w_{1,r_1}(T_1^{\lceil a_1N\rceil-1}x) \\
   & \cdots \\
   & w_{k,1}(x),\cdots,w_{k,r_k}(x), \\
   & w_{k,1}(T_1x),\cdots,w_{k,r_k}(T_1x), \\
   & w_{k,1}(T_1^{\lceil(a_1+a_2+\cdots+a_k)N\rceil-1}x),\cdots,w_{k,r_k}(T_1^{\lceil(a_1+a_2+\cdots+a_k)N\rceil-1}x)).
\end{align*}
From  Proposition \ref{b} and (\ref{k}) we see that
$F(N,\cdot)\succ\bigvee\limits_{i=1}^{k}\bigvee\limits_{j=0}^{{\lceil(a_1+a_2+\cdots+a_i)N\rceil-1}}T_1^{-j}
\tau_{i-1}^{-1}\alpha_i$. Usually, if
$$S\subset\{1,\cdots,\sum\limits_{i=1}^{k}r_i{\lceil(a_1+a_2+\cdots+a_i)N\rceil}\},$$ then $F(N,x)_S\in[0,1]^{|S|}$
is the projection of $F(N,x)$ to the coordinates in the index set $S$. 
\begin{claim}\label{1.5.13}
  Let $\epsilon>0$, $\ D= {\rm mdim}_{M}^{\bf a}(X_1,d)$. If $N$ is larger than some $N(\epsilon)$, there exists
  $\xi\in(0,1)^{\sum\limits_{i=1}^{k}r_i{\lceil(a_1+a_2+\cdots+a_i)N\rceil}}$ such that for any $|S|\geq
  (D+\epsilon)N$,$$\xi_S\notin F(N,X)_S.$$
\end{claim}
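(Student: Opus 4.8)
The plan is to prove the claim by a volume–counting (measure) argument in the cube $[0,1]^{M}$, where $M=\sum_{i=1}^{k}r_{i}\lceil(a_{1}+\cdots+a_{i})N\rceil$, in the spirit of Theorem 4.2 of \cite{LWE}. The guiding idea is that $F(N,X_1)$ is a Lipschitz image of $X_1$ which, because $X_1$ has weighted metric mean dimension $D$, can be covered by only about $\delta^{-DN}$ sup-norm boxes of side comparable to $\delta$; once we project to a coordinate set $S$ of size at least $(D+\epsilon)N$, this image fills an exponentially small fraction of the $|S|$-dimensional volume, so almost every $\xi$ avoids every such projection.

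First I would pin down that each coordinate function of $F(N,\cdot)$ is Lipschitz with respect to the dynamical metric $d_N^{\bf a}$. Since $\tau_{i-1}^{-1}\alpha_i$ is pulled back from $X_i$, the functions $w_{i,j}$ factor as $w_{i,j}=\tilde w_{i,j}\circ\tau_{i-1}$ with $\tilde w_{i,j}$ Lipschitz on $(X_i,d_i)$, and using $\tau_{i-1}T_1=T_i\tau_{i-1}$ one gets $w_{i,j}(T_1^{\ell}x)=\tilde w_{i,j}(T_i^{\ell}\tau_{i-1}x)$. Hence for $0\le\ell\le\lceil(a_1+\cdots+a_i)N\rceil-1$ the difference $|w_{i,j}(T_1^{\ell}x)-w_{i,j}(T_1^{\ell}y)|$ is at most $C_L\,d_i(T_i^{\ell}\tau_{i-1}x,T_i^{\ell}\tau_{i-1}y)\le C_L\,d_N^{\bf a}(x,y)$, because $d_N^{\bf a}$ records exactly the $T_i$-orbit of length $\lceil(a_1+\cdots+a_i)N\rceil$. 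Consequently a minimal $({\bf a},N,\delta)$-spanning set of $X_1$, of cardinality $r_N^{\bf a}(X_1,\delta)$, maps under $F(N,\cdot)$ to a $2C_L\delta$-net in the sup norm; equivalently $F(N,X_1)$ is covered by $r_N^{\bf a}(X_1,\delta)$ cubes of side $2C_L\delta$.

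Next, for a fixed index set $S$ with $|S|=s$, I call $\xi$ bad for $S$ if $\xi_S\in F(N,X_1)_S$; this set is closed, and since the free coordinates contribute factor $1$, its Lebesgue measure equals the $s$-dimensional volume of $F(N,X_1)_S$, which by the previous step is at most $r_N^{\bf a}(X_1,\delta)(2C_L\delta)^{s}$. Using $M\le cN$ for a suitable constant $c$, the estimate $\binom{M}{s}\le(eM/s)^{s}$, and a geometric–series summation over all $s\ge(D+\epsilon)N$, the total measure of the bad $\xi$ is bounded by $2\,r_N^{\bf a}(X_1,\delta)(\rho\delta)^{(D+\epsilon)N}$ for a constant $\rho$ independent of $\delta$ and all small $\delta$. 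Because $D=\liminf_{\delta\to0}r^{\bf a}(X_1,\delta)/|\log\delta|$, I would then choose $\delta$ small along a realizing sequence so that $r^{\bf a}(X_1,\delta)\le(D+\epsilon/4)|\log\delta|$, and afterwards $N(\epsilon)$ large so that $\tfrac1N\log r_N^{\bf a}(X_1,\delta)\le r^{\bf a}(X_1,\delta)+\eta$; taking $\tfrac1N\log$ of the bound, the powers of $|\log\delta|$ combine to $\bigl((D+\epsilon/4)-(D+\epsilon)\bigr)|\log\delta|=-\tfrac{3\epsilon}{4}|\log\delta|$, which overwhelms the remaining constants as $\delta\to0$. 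Thus the bad set has measure $<1$, its complement has positive measure, and discarding the null set $\partial([0,1]^{M})$ yields $\xi\in(0,1)^{M}$ with $\xi_S\notin F(N,X_1)_S$ for all $|S|\ge(D+\epsilon)N$.

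The hard part is exactly this cancellation: one must orchestrate the $\liminf$ choice of $\delta$ together with the $\limsup$ choice of $N$ so that the gain $\delta^{\epsilon N}$ coming from the extra $\epsilon N$ fixed coordinates beats both the binomial entropy factor and the spanning-set growth $\delta^{-DN}$. The Lipschitz bookkeeping of the second paragraph—ensuring that $d_N^{\bf a}$, rather than merely the length-$\lceil a_1N\rceil$ orbit of $T_1$, controls \emph{every} coordinate block $i$ up to time $\lceil(a_1+\cdots+a_i)N\rceil$—is the technical point that makes the covering estimate legitimate, and it is precisely here that the weighted (as opposed to classical) structure enters.
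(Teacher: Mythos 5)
Your argument is correct and is essentially the paper's own proof: cover $X_1$ by roughly $\delta^{-(D+\epsilon/2)N}$ weighted Bowen balls (equivalently, a minimal $({\bf a},N,\delta)$-spanning set, using the paper's noted equivalence ${\rm mdim}_M^{\bf a}(X_1,d)=\liminf_{\delta\to 0}r^{\bf a}(X_1,\delta)/|\log\delta|$), push this forward under the Lipschitz map $F(N,\cdot)$ to a covering of $F(N,X_1)$ by sup-norm cubes of side $2C_L\delta$, and beat the union bound over all index sets $S$ with $|S|\geq(D+\epsilon)N$ by choosing $\delta$ along a liminf-realizing sequence before taking $N$ large; your Lebesgue-measure phrasing and geometric-series bound on $\sum_{s}\binom{M}{s}(2C_L\delta)^{s}$ are cosmetic variants of the paper's random-$\xi$ union bound and its cruder $2^{M}$ count of subsets. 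The one substantive difference is in your favor: you arrange $w_{i,j}=\tilde w_{i,j}\circ\tau_{i-1}$ with $\tilde w_{i,j}$ Lipschitz on $(X_i,d_i)$, which is exactly what makes $F(N,B_N^{\bf a}(x,\delta))$ lie in a $C_L\delta$-ball in the sup norm for the blocks $i\geq 2$ at times $\ell\geq\lceil a_1N\rceil$, whereas the paper defines $w_{i,j}$ through $d_1$-distances on $X_1$ and asserts this containment without justification, even though membership in the weighted Bowen ball only controls $d_i(T_i^{\ell}\tau_{i-1}x,T_i^{\ell}\tau_{i-1}x')$ at those times and not $d_1(T_1^{\ell}x,T_1^{\ell}x')$.
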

\begin{proof}[Proof of Claim \ref{1.5.13}]
  Let $$\delta<(2^{\sum\limits_{i=1}^{k}r_i{\lceil(a_1+a_2+\cdots+a_i)\rceil}}(2C_L)^{2D})^{-\frac{2}{\epsilon}},$$
   such that $$\dfrac{S(X_1,\epsilon,d_n^{\bf a})}{|\log\delta|}\leq \text {mdim}_{M}^{\bf a}(X_1,d)+\epsilon/4.$$
  Fix $N$  large enough (depending on $\epsilon$ and $\delta$), $X_1$ can be covered by
  $\delta^{-(D+\epsilon/2)N}$ weighted Bowen balls of the form
  \begin{align*}
 B_{N}^{\bf a}(x,\delta)=&\{x'\in X_1:d_i(T_i^j\tau_{i-1}x,T_i^j\tau_{i-1}x')<\delta, \
 \ \forall1 \leq i\leq k \ \text{and}\ \\& 0\leq j\leq\lceil(a_1+a_2+\cdots+a_i)N\rceil-1\}.
  \end{align*}
   Notice that $C_L$ is a Lipschitz constant for all $w_{i,j}$, we have
  $$F(N,B_N^{\bf a}(x,\delta))\subset\{a\in[0,1]^{\sum\limits_{i=1}^{k}r_i{\lceil(a_1+a_2+\cdots+a_i)N\rceil}}:
  \|F(N,x)-a\|_{\infty}\leq C_L\delta\}.$$
  Then the set $F(N, X_1)$ can be covered by $\delta^{-N(D+\epsilon/2)}$ balls in the $\parallel  \cdot \parallel_{\infty}$ norm of radius $C_{L}\delta.$
  Enumerate these weighted Bowen balls as $B^{\bf a}(k),\ k=1,\cdots,K=\delta^{-(D+\epsilon/2)N}.$
  Choose $\xi$ with uniform probability in $[0,1]^{\sum\limits_{i=1}^{k}r_i{\lceil(a_1+a_2+\cdots+a_i)N\rceil}}$.

  Then for a set
  $S\subset\{1,\cdots,\sum\limits_{i=1}^{k}r_i{\lceil(a_1+a_2+\cdots+a_i)N\rceil}\},$
  $$P(\xi_S\in F(N,X_1)_S)\leq\sum\limits_{k=1}^KP(\xi_S\in B^{\bf a}(k)|_S)\leq\delta^{-(D+\epsilon/2)N}
  (2C_L\delta)^{|S|}.$$
  Hence
  \begin{align*}
    P(& \exists S:|S|\geq(D+\epsilon)N\ \text{and}\ \xi_S\in F(N,X_1)_S)   \\
     & \leq\sum\limits_{|S|\geq(D+\epsilon)N}P(\xi_S\in F(N,X_1)_S) \\
     & \leq(\sharp\ \text{of such }S)\times\delta^{-(D+\epsilon/2)N}(2C_L\delta)^{(D+\epsilon)N} \\
     & \leq 2^{\sum\limits_{i=1}^{k}r_i{\lceil(a_1+a_2+\cdots+a_i)N\rceil}}
     \times\delta^{-(D+\epsilon/2)N}(2C_L\delta)^{(D+\epsilon)N}\\
     & \leq 2^{\sum\limits_{i=1}^{k}r_i{\lceil(a_1+a_2+\cdots+a_i)N\rceil}}\times
     ((2C_L)^{2D}\delta^{\epsilon/2})^N\ll1,
  \end{align*}
  and so, with high probability, a random $\xi$ will satisfy the requirements.
\end{proof}
\begin{claim}\label{1.5.4}
If $\pi: F(N,X_1)\rightarrow [0,1]^{\sum\limits_{i=1}^{k}r_i{\lceil(a_1+a_2+\cdots+a_i)N\rceil}}$ satisfies
  \begin{align*}
    &\{1\leq k\leq\sum\limits_{i=1}^{k}r_i{\lceil(a_1+a_2+\cdots+a_i)N\rceil}:\xi_k=a\}\subset \\
     &\{1\leq k\leq\sum\limits_{i=1}^{k}r_i{\lceil(a_1+a_2+\cdots+a_i)N\rceil}:\pi(\xi)|_k=a\},
  \end{align*}
   for both $a=0$ and $1$, and all
  $\xi\in[0,1]^{\sum\limits_{i=1}^{k}r_i{\lceil(a_1+a_2+\cdots+a_i)N\rceil}}.$
  Then $\pi\circ F(N,X_1)$ is compatible with
  $\bigvee\limits_{i=1}^k\bigvee\limits_{j=0}^{\lceil(a_1+a_2+\cdots+a_i)N\rceil-1}T_1^{-j}\tau_{i-1}^{-1}
  \alpha_i$.
\end{claim}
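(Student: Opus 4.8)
The plan is to exhibit the weighted cover $\mathcal{W}:=\bigvee_{i=1}^k\bigvee_{j=0}^{\lceil(a_1+a_2+\cdots+a_i)N\rceil-1}T_1^{-j}\tau_{i-1}^{-1}\alpha_i$ as the \emph{exact} $F(N,\cdot)$-preimage of a fixed coordinate cover of the cube, and then to verify that the boundary-preserving hypothesis on $\pi$ keeps this preimage relation intact. Write $m=\sum_{i=1}^{k}r_i\lceil(a_1+a_2+\cdots+a_i)N\rceil$ and index the coordinates of $[0,1]^m$ by triples $(i,j,s)$ with $1\le i\le k$, $0\le j\le\lceil(a_1+a_2+\cdots+a_i)N\rceil-1$ and $1\le s\le r_i$, so that the $(i,j,s)$-coordinate of $F(N,x)$ is $w_{i,s}(T_1^jx)$. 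For each coordinate set $P^0_{(i,j,s)}=\{a\in[0,1]^m:a_{(i,j,s)}<1\}$ and $P^1_{(i,j,s)}=\{a\in[0,1]^m:a_{(i,j,s)}>0\}$; these are open and cover the cube, so $\gamma:=\bigvee_{(i,j,s)}\{P^0_{(i,j,s)},P^1_{(i,j,s)}\}$ is a finite open cover of $[0,1]^m$. Using (\ref{k}) one computes $F(N,\cdot)^{-1}(P^0_{(i,j,s)})=T_1^{-j}U_{i_s}$ and $F(N,\cdot)^{-1}(P^1_{(i,j,s)})=T_1^{-j}V_{i_s}$; since $\tau_{i-1}^{-1}\alpha_i=\bigvee_{s=1}^{r_i}\{U_{i_s},V_{i_s}\}$, taking the join over all $(i,j,s)$ yields the exact identity $F(N,\cdot)^{-1}(\gamma)=\mathcal{W}$.

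Next I would convert the hypothesis on $\pi$ into a pointwise containment. Writing $F=F(N,\cdot)$, the hypothesis read contrapositively says that for $\eta\in F(X_1)$ one cannot have $\pi(\eta)_{(i,j,s)}<1$ while $\eta_{(i,j,s)}=1$, nor $\pi(\eta)_{(i,j,s)}>0$ while $\eta_{(i,j,s)}=0$; hence $\pi^{-1}(P^a_{(i,j,s)})\subseteq P^a_{(i,j,s)}$ inside $F(X_1)$ for $a=0,1$. Consequently $(\pi\circ F)^{-1}(P^a_{(i,j,s)})=F^{-1}(\pi^{-1}(P^a_{(i,j,s)}))\subseteq F^{-1}(P^a_{(i,j,s)})$ for every coordinate and every $a\in\{0,1\}$.

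Finally, every member of $(\pi\circ F)^{-1}(\gamma)$ is an intersection $\bigcap_{(i,j,s)}(\pi\circ F)^{-1}(P^{a_{(i,j,s)}}_{(i,j,s)})$ over a choice of signs $a_{(i,j,s)}\in\{0,1\}$, and by the previous step it is contained in the corresponding member $\bigcap_{(i,j,s)}F^{-1}(P^{a_{(i,j,s)}}_{(i,j,s)})$ of $F^{-1}(\gamma)=\mathcal{W}$. Thus $(\pi\circ F)^{-1}(\gamma)\succ\mathcal{W}$, and restricting $\gamma$ to the compact image $(\pi\circ F)(X_1)$ produces a finite open cover whose $(\pi\circ F)$-preimage refines $\mathcal{W}$; by the definition of $\alpha$-compatibility this is precisely the assertion that $\pi\circ F(N,X_1)$ is compatible with $\mathcal{W}$. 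The argument is essentially bookkeeping; the only point requiring care is extracting the inclusion $\pi^{-1}(P^a)\subseteq P^a$ in the correct direction from the hypothesis (it is the contrapositive, at the two boundary values $a=0$ and $a=1$, that is used), together with keeping the triple-index labelling of the $m$ coordinates consistent throughout.
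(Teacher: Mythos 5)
Your proof is correct, but it reaches compatibility by a different mechanism than the paper. The paper argues fiberwise: for each point $\xi$ of the cube it uses the boundary-preservation hypothesis (read contrapositively, exactly as you do) together with (\ref{k}) to produce a single member $\bigcap_{i,m_i,n_i} W_{m_i,n_i}$ of the weighted cover containing the fiber $F^{-1}\pi^{-1}(\xi)$, and then compatibility follows from Proposition \ref{b}, which converts the pointwise fiber condition into compatibility via compactness of $X_1$. You instead work at the level of covers: you exhibit an explicit finite open cover $\gamma$ of the cube built from the coordinate sets $P^0_{(i,j,s)}=\{a_{(i,j,s)}<1\}$ and $P^1_{(i,j,s)}=\{a_{(i,j,s)}>0\}$, establish the exact identity $F(N,\cdot)^{-1}(\gamma)=\bigvee_{i=1}^k\bigvee_{j}T_1^{-j}\tau_{i-1}^{-1}\alpha_i$, show the hypothesis on $\pi$ gives $\pi^{-1}(P^a_{(i,j,s)})\subseteq P^a_{(i,j,s)}$, and conclude $(\pi\circ F)^{-1}(\gamma)$ refines the weighted cover, which verifies the definition of compatibility directly. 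The combinatorial core is identical in both arguments ((\ref{k}) plus the contrapositive of the boundary condition), but your route is more self-contained: it bypasses Proposition \ref{b} and hence needs no compactness, and the identity $F^{-1}(\gamma)=\mathcal{W}$ incidentally re-derives the paper's earlier assertion that $F(N,\cdot)$ itself is compatible with the weighted cover. The paper's route is shorter once Proposition \ref{b} is taken as given, and its fiberwise formulation is the one reused verbatim when the retraction $\pi$ onto the skeleton $J_{m_0}$ is applied in the main proof. One small point to keep straight in your write-up (which you do flag): the inclusion $\pi^{-1}(P^0)\subseteq P^0$ comes from the hypothesis at the boundary value $a=1$, and $\pi^{-1}(P^1)\subseteq P^1$ from the value $a=0$, so the superscript on $P^a$ does not match the value of $a$ in the hypothesis being invoked.
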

\begin{proof}[Proof of Claim \ref{1.5.4}]
  In fact, if $\xi\in[0,1]^{\sum\limits_{i=1}^{k}r_i{\lceil(a_1+a_2+\cdots+a_i)N\rceil}},$
  define for $0\leq m_i< \lceil(a_1+a_2+\cdots+a_i)N\rceil$ and $1\leq n_i<r_i$
  $$W_{m_i,n_i}=
\begin{cases}
  T_1^{-m_i}U_{n_i} &   \text{if }\xi_{m_ir_i+n_i}=0, \\
  T_1^{-m_i}V_{n_i} &   \text{otherwise.} \\
\end{cases}
$$

It is easy to see  that
$$F^{-1}\pi^{-1}(\xi)\subset\bigcap\limits_{i=1}^k\bigcap\limits_{0\leq m_i<\lceil(a_1+a_2+\cdots+a_i)N\rceil,
\atop 1\leq n_i< r_i}
W_{m_i,n_i}\in\bigvee\limits_{i=1}^k\bigvee\limits_{j=0}^{\lceil(a_1+a_2+\cdots+a_i)N\rceil-1}
T_1^{-j}\tau_{i-1}^{-1}\alpha_i.$$
\end{proof}

Let $\epsilon>0$. Choose $\overline{\xi}, N$ as in Claim $\ref{1.5.13}$. Set
$$\Phi=\{\xi\in[0,1]^{\sum\limits_{i=1}^{k}r_i{\lceil(a_1+a_2+\cdots+a_i)N\rceil}}:
\xi_k=\overline{\xi}_k\ \text{for more than }(D+\epsilon)N\ \text{indexes }k\};$$
thus $F(N,X_1)\subset[0,1]^{\sum\limits_{i=1}^{k}r_i{\lceil(a_1+a_2+\cdots+a_i)N\rceil}}\backslash\Phi$.
 We  denote  the latter set briefly by  $\Phi^C$. We shall construct a continuous retraction $\pi$ of
$\Phi^c$ onto the $\lfloor(D+\epsilon)N\rfloor+1$ skeleton of the cube $I=
[0,1]^{\sum\limits_{i=1}^{k}r_i{\lceil(a_1+a_2+\cdots+a_i)N\rceil}}$. For $1\leq m \leq \sum\limits_{i=1}^{k}r_i{\lceil(a_1+a_2+\cdots+a_i)N\rceil} $,  define
$$J_m=\{\xi\in I:\xi_i\in\{0,1\}\ \text{for at least }m\ \text{indexes }1\leq i\leq
 \sum\limits_{i=1}^{k}r_i{\lceil(a_1+a_2+\cdots+a_i)N\rceil}\}.$$

Since $\overline{\xi}$ is in the  interior of $I$, one can define $\pi_1:I\backslash\{\overline{\xi}\}
\rightarrow J_1$ by mapping each $\xi$ to the intersection of the ray starting at $\overline{\xi}$ and passing
through $\xi$ and $J_1$.
For each of the $(\sum\limits_{i=1}^{k}r_i{\lceil(a_1+a_2+\cdots+a_i)N\rceil}-1)$-dimensional cubes $I^l$ that
comprise $J_1$, we can define a retraction on $I^l$ in a similar fashion using as a center the projection of
$\overline{\xi}$ onto  $I^{l}$. This will define a continuous retraction $\pi_{2}$ of $\Phi^{C}$ into $J_{2}$. As long as there is some intersection of $\Phi$  with the cubes in $J_{m}$  this process can be continued, thus we get finally a continuous projection  $\pi$ of $\Phi^{C}$ onto $J_{m_{0}}$, with
$$m_0+\lfloor(D+\epsilon)N\rfloor+1=\sum\limits_{i=1}^{k}r_i{\lceil(a_1+a_2+\cdots+a_i)N\rceil}.$$
Obviously, $\pi$ satisfies the conditions of Claim $\ref{1.5.4}$. Therefore
$$\pi\circ F(N,\cdot)\succ
\bigvee\limits_{i=1}^k\bigvee\limits_{j=0}^{\lceil(a_1+a_2+\cdots+a_i)N\rceil-1}T_1^{-j}\tau_{i-1}^{-1}\alpha_i.$$
Moreover, as $F(N,\cdot)\subset\Phi^C$, $$\pi\circ F(N,\cdot)\subset J_{m_0},$$ the latter having topological dimension
$\lfloor(D+\epsilon)N\rfloor+1$.

In conclusion, we have constructed a
$\bigvee\limits_{i=1}^k\bigvee\limits_{j=0}^{\lceil(a_1+a_2+\cdots+a_i)N\rceil-1}T_1^{-j}\tau_{i-1}^{-1}\alpha_i$
compatible function from $X_1$ to a space of topological dimension $\leq$$(D+\epsilon)N+1$, and according to Proposition $\ref{2.4}$,
$$D(\bigvee\limits_{i=1}^k\bigvee\limits_{j=0}^{\lceil(a_1+a_2+\cdots+a_i)N\rceil-1}T_1^{-j}\tau_{i-1}^{-1}\alpha_i)
\leq(D+\epsilon)N+1.$$
As $\epsilon>0$ can be chosen as small as we like, $\alpha_{i}$ as defined as we like for $1\leq i \leq k$, we conclude that $$\text {mdim}^{\bf a}(X_1, T_{1})\leq D.$$
\end{proof}

\section{Proof of Theorem \ref{thm3} }\label{4}
In this section, we discuss the relationship between the small-boundary property and weighted mean  dimension.
\begin{definition}
  Let $(X, T)$ be a dynamical system, and $E$  a subset of $X$. We define the { orbit capacity} of the set $E$ to be $$ocap(E)=\lim\limits_{n\to\infty}\sup\limits_{x\in X}\dfrac{1}{n}\sum_{i=0}^{n-1}1_E(T^ix).$$
  A set $E$ will be called { small} if $ocap(E)=0$.
\end{definition}
\begin{definition}
  A dynamical system $(X,T)$ has the { small boundary property} (SBP) if every point $x\in X$ and every open set $U\ni x$ there is a neighborhood $V\subset U$ of $x$ with small boundary.
\end{definition}
\begin{proposition}\rm\cite{LWE}\label{smal}
  If $(X,T)$ has the SBP, then for every open cover $\alpha$ of $X$ and every $\epsilon$ there is a subordinate partition of unity $\phi_j:\ X\rightarrow [0,1],\ (j=1,\cdots,|\alpha|)$ such that
  \begin{align}
    &\sum_{j=1}^{|\alpha|}\phi_j(x) = 1, \notag\\
    &supp(\phi_j) \subset U\ \text{for some}\ U\in\alpha,\ j=1,\cdots,|\alpha|, \label{10} \\
    &ocap(\bigcup\limits_{j=1}^{|\alpha|}\phi_j^{-1}(0,1))< \epsilon. \notag
  \end{align}
\end{proposition}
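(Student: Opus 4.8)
The plan is to follow the Lindenstrauss--Weiss strategy: reduce $\alpha=\{A_1,\dots,A_L\}$ (with $L=|\alpha|$) to a finite family of small-boundary sets, enlarge the (capacity-zero) union of their boundaries to an open set of small orbit capacity, and then manufacture a partition of unity that is genuinely $\{0,1\}$-valued off that small open set. I will work with $(X,T)$ as in the statement.

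First I would record the elementary facts about orbit capacity on which everything rests. Writing $c_n(E)=\sup_{x\in X}\sum_{i=0}^{n-1}1_E(T^ix)$, the splitting of an orbit segment gives $c_{m+n}(E)\le c_m(E)+c_n(E)$, so by Fekete's lemma $ocap(E)=\lim_n c_n(E)/n=\inf_n c_n(E)/n$; moreover $ocap$ is monotone, finitely subadditive, and $ocap(E)\le ocap(\overline E)$. The key technical step, and the one I expect to be the \emph{main obstacle}, is an outer-regularity statement: if $E$ is closed with $ocap(E)=0$, then for every $\delta>0$ there is an open $O\supset E$ with $ocap(O)<\delta$. I would prove it thus. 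Since $\inf_n c_n(E)/n=0$, fix $n_0$ with $c_{n_0}(E)<(\delta/2)n_0$; as $c_{n_0}(E)$ is an integer, every length-$n_0$ orbit segment meets $E$ at most $k:=\lceil(\delta/2)n_0\rceil-1$ times. Let $O_\eta=\{x:d(x,E)<\eta\}$, so $\overline{O_\eta}\downarrow E$ as $\eta\downarrow0$, and note $\Phi_\eta(x)=\sum_{i=0}^{n_0-1}1_{\overline{O_\eta}}(T^ix)$ is upper semicontinuous with $\Phi_\eta\downarrow\Phi_0\le k$ pointwise. A compactness argument (if $\Phi_{\eta}(x_\eta)\ge k+1$ along some $\eta\to0$, a limit point of $x_\eta$ forces $\Phi_0\ge k+1$, contradicting $\Phi_0\le k$) shows $\Phi_\eta\le k$ for all small $\eta$, whence $ocap(O_\eta)\le ocap(\overline{O_\eta})\le c_{n_0}(\overline{O_\eta})/n_0\le k/n_0<\delta$.

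With this lemma the remaining steps are bookkeeping. For each $x$ choose a member $A_{l(x)}\in\alpha$ containing it, shrink to an open $W$ with $\overline W\subset A_{l(x)}$, and apply the SBP inside $W$ to obtain a small-boundary neighborhood $V_x$ with $\overline{V_x}\subset A_{l(x)}$. Extracting a finite subcover $V_1,\dots,V_m$, the set $B=\bigcup_s\partial V_s$ is closed with $ocap(B)=0$ by subadditivity, so the lemma yields an open $O\supset B$ with $ocap(O)<\epsilon$, taken small enough to preserve $\overline{V_s}\subset A_{l(s)}$. Put $G=X\setminus O$, which is compact and misses every $\partial V_s$; hence each $G\cap V_s=G\cap\overline{V_s}$ is clopen in $G$, the membership pattern is locally constant on $G$, and grouping the resulting clopen pieces by the index $j=l(\min\{s:x\in V_s\})$ decomposes $G$ into pairwise disjoint compact sets $C_1,\dots,C_L$ with $C_j\subset A_j$.

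Finally I would build the partition of unity. By Urysohn pick continuous $h_j:X\to[0,1]$ with $h_j\equiv1$ on $C_j$, $supp(h_j)\subset A_j$, and $h_j\equiv0$ on $\bigcup_{j'\ne j}C_{j'}$, so that $\sum_jh_j\equiv1$ on $G$. Taking any partition of unity $\{\theta_j\}$ subordinate to $\alpha$ and a cutoff $\chi$ equal to $1$ where $\sum_jh_j=0$ and to $0$ where $\sum_jh_j\ge1$, set $H_j=h_j+\chi\theta_j$; then $\sum_jH_j>0$ everywhere and $supp(H_j)\subset A_j$, so $\phi_j:=H_j/\sum_{j'}H_{j'}$ is a partition of unity with $supp(\phi_j)\subset A_j$. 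On $G$ one has $\chi=0$ and $\sum_jh_j=1$, hence $\phi_j=1_{C_j}$ is $\{0,1\}$-valued there, so $\bigcup_j\phi_j^{-1}(0,1)\subset O$ and $ocap(\bigcup_j\phi_j^{-1}(0,1))\le ocap(O)<\epsilon$. This delivers the three asserted properties; as stressed, the only non-routine ingredient is the outer-regularity lemma, after which the construction is standard Urysohn and partition-of-unity manipulation.
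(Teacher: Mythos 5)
Your proof is correct, and since the paper offers no proof of this proposition at all --- it simply quotes it from \cite{LWE} --- the right comparison is with the original Lindenstrauss--Weiss argument, which yours essentially reconstructs. In particular, your outer-regularity lemma (a closed set of zero orbit capacity has open neighborhoods of arbitrarily small orbit capacity, proved via upper semicontinuity of $x\mapsto\sum_{i=0}^{n_0-1}1_{\overline{O_\eta}}(T^ix)$ together with compactness) is precisely the key step in their proof, and your subsequent steps --- a finite small-boundary cover subordinate to $\alpha$, the clopen decomposition of $G=X\setminus O$ into compact pieces $C_j\subset A_j$, and the Urysohn/normalization construction making the $\phi_j$ take values in $\{0,1\}$ off $O$ --- follow the same route, correctly executed.
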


\begin{proof}[Proof of Theorem \ref{thm3}]
 Assume that  for each $1 \leq i \leq  k-1$, $(X_{i+1}, T_{i+1})$ is a factor  of $(X_{i}, T_{i})$ with  a factor map $\pi_{i}: X_{i}\rightarrow X_{i+1}$. Define $\tau_{i}: X_{1} \rightarrow X_{i+1}$ by $\tau_{i}=\pi_{i} \circ \pi_{i-1}\circ \cdots \circ \pi_{0}$ for $i=0,1,\cdots, k-1$. Let $(X_1,T_1)$ have the SBP and $\alpha_i$ is a finite open cover of $X_i$ for $1 \leq  i \leq k$.
  Given $\epsilon>0$.  According to the  Proposition $\ref{smal}$,  we can construct a partition of unity subordinate to  $\beta_i=\tau_{i-1}^{-1}\alpha_i$  such  that  satisfies  (\ref{10}) for $i=1,\cdots, k$. Let $|\beta_i|=c_i$ and $A=\bigcup\limits_{m=1}^{c_i}\phi_{m,i}^{-1}(0,1)$ for each $1 \leq i \leq k$.
 Let $N$ is large enough such that
  \begin{align}\label{12}
    \dfrac{1}{\lceil(a_1+a_2+\cdots+a_i)N\rceil}\sum\limits_{j=0}^{\lceil(a_1+a_2+\cdots+a_i)N\rceil-1}1_A(T_{1}^j(x))<\epsilon,\ \forall x\in X_1,i=1,\cdots ,k.
  \end{align}
  Let $\Phi_{i}:(X_1,T_1)\rightarrow\mathbb{R}^{c_i}$ is given by $$x\rightarrow(\phi_{1,i}(x),\cdots,\phi_{c_i,i}(x)).$$
Given  $1\leq i \leq k $,  define the map $f_N^{i} :X_1\rightarrow\mathbb{R}^{c_i\lceil(a_1+a_2+\cdots+a_i)N\rceil}$ by $$f_N^{i}(x)=(\Phi(x),\Phi(T_{1}x),\cdots,\Phi(T_{1}^{\lceil(a_1+a_2+\cdots+a_i)N\rceil-1}x)).$$
  We claim that $f_N^{i}(X_1)$ is a subset of a finite number of $\epsilon c_i\lceil(a_1+a_2+\cdots+a_i)N\rceil$ dimensional affine subspaces of $\mathbb{R}^{c_i\lceil(a_1+a_2+\cdots+a_i)N\rceil}$.

  Indeed, let $e_l^m,\ l=1,\cdots,\lceil(a_1+a_2+\cdots+a_i)N\rceil,\ m=1,\cdots,c_i$ be the standard base of $\mathbb{R}^{c_i\lceil(a_1+a_2+\cdots+a_i)N\rceil}$.
  Define for every $I=\{i_1,i_2,\cdots,i_{N'}\},\ N'<\epsilon\lceil(a_1+a_2+\cdots+a_i)N\rceil$, and every $\xi\in\{0,1\}^{c_i\lceil(a_1+a_2+\cdots+a_i)N\rceil}$
  $$C(I,\xi,i)=\text {span}\{e_l^m:\ l\in I,\ 1\leq m\leq c_i\}+\xi.$$
  Then by (\ref{12}),
  $$f_N^{i}(X_1)\subset\bigcup\limits_{|I|<\epsilon\lceil(a_1+a_2+\cdots+a_i)N\rceil,\ \xi}C(I,\xi,i).$$
  It is obvious that $f_N^{i}$ is $\bigvee\limits_{j=0}^{\lceil(a_1+a_2+\cdots+a_i)N\rceil-1}T_{1}^{-j}\beta_i$-compatible.
 According to Proposition $\ref{2.4}$, we have
  $$D(\bigvee\limits_{j=0}^{\lceil(a_1+a_2+\cdots+a_i)N\rceil-1}T_{1}^{-j}\beta_i)<\epsilon c_i\lceil(a_1+a_2+\cdots+a_i)N\rceil.$$
  By subadditive of $D(\alpha)$, we thus get
  \begin{align*}
    D(\bigvee\limits_{i=1}^k\bigvee\limits_{j=0}^{\lceil(a_1+a_2+\cdots+a_i)N\rceil-1}T_1^{-j}\beta_i) & \leq \sum\limits_{i=1}^{k}D(\bigvee\limits_{j=0}^{\lceil(a_1+a_2+\cdots+a_i)N\rceil-1}T_1^{-j}\beta_i)\\
     & \leq \sum\limits_{i=1}^{k}\epsilon c_i \lceil(a_1+a_2+\cdots+a_i)N\rceil.
  \end{align*}
  Since $\sum\limits_{i=1}^{k}c_i<\infty$, it follows that $(X_1,T_1)$ has zero weighted mean dimension.
\end{proof}
\section{Example}\label{5}

\begin{example}
Put ${\bf a}=\left(  1,1\right)   $.  Assume that $X_{1}=([0,1]^{2})^{\mathbb{Z}}$ and $X_{2}=[0,1]^{\mathbb{Z}}$. Define $T_{1}: X_{1} \rightarrow X_{1}$ by  $T_{1}((x_{m})_{m\in \mathbb{Z}})=(x_{m+1})_{m\in \mathbb{Z}}, $  and  define $T_{2}: X_{2} \rightarrow X_{2}$ by $T_{2}((a_{m})_{m\in \mathbb{Z}})=(a_{m+1})_{m\in \mathbb{Z}}$.
 Let $\tau_{1}$ be the canonical projection from $X_{1}$ to $X_{2}$, that is to say, $\tau_{1}((x_{m}^{1}, x_{m}^{2})_{m \in  \mathbb{Z}})= (x_{m}^{1})_{m\in \mathbb{Z}}$. Then $(X_{2}, T_{2})$ is the factor of $(X_{1}, T_{1})$ associated with the factor map $\tau_{1}$. We define  a distance $d_{1}$ on $X_{1}$  by
$$d_{1}(x,y)= \sum\limits_{m\in \mathbb{Z}} 2^{-|m|}d'(x_{m},y_{m}), \ \ \ \ (x=(x_{m})_{m \in \mathbb{Z}}, y=(y_{m})_{m\in \mathbb{Z}} \in X_{1} ),$$
 where $d^{'}(s, t)= \max\{|s_{1}-t_{1}|, |s_{2}- t_{2}|\}$ is a distance on $[0,1]^{2}$.
 We define a distance $d_{2}$ on $X_{2}$ by
 $$ d_{2}(a, b)=\sum\limits_{m\in \mathbb{Z}} 2^{-|m|}|a_{m}-b_{m}|, \ \ \ \  (a=(a_{m})_{m\in \mathbb{Z}}, b=(b_{m})_{m\in \mathbb{Z}} \in X_{2}).$$
 We calculate the weighted metric mean dimension of $X_{1}$. Let $\epsilon >0$ and set  $l=\lfloor \log_{2}(\frac{4}{\epsilon})\rfloor$. Then $\sum\limits_{|n| \leq l} 2^{-|n|} \leq \frac{\epsilon}{2}$.
 We consider   an open covering of $[0,1]^{2}$ by
 $$ D_{\frac{\epsilon}{6}}(k,k^{'})=(\frac{(k-1)\epsilon}{12}, \frac{(k+1)\epsilon}{12}) \times (\frac{(k^{'}-1)\epsilon}{12}, \frac{(k^{'}+1)\epsilon}{12}),~~1 \leq k,k^{'} \leq \lfloor\frac{12}{\epsilon}\rfloor.$$
 For $n\geq 1$, consider
 $$  ([0,1]^{2})^{\mathbb{Z}}= \bigcup\limits_{1\leq k_{-l}, \cdots ,k_{2n+l}\leq \frac{12}{\epsilon}\atop 1\leq k^{'}_{-l}, \cdots ,k^{'}_{2n+l}\leq \frac{12}{\epsilon} } \{x| x_{-l}\in D_{\frac{\epsilon}{6}}(k_{-l}, k^{'}_{-l}),\cdots, x_{2n+l}\in D_{\frac{\epsilon}{6}}(k_{2n+l}, k^{'}_{2n+l})\}.$$
  We define $\sharp (X, d, \epsilon)$ as the minimum cardinality $N$ of the open covering $\left\lbrace U_{1}, \cdots, U_{N}\right\rbrace$ of $X$ such that all $U_{n}$ have diameter smaller than $\epsilon$.
 Each open  set in the right hand side has diameter less than $\epsilon$ with respect to the distance $d_{n}^{\bf a}$. Hence
 $$\sharp([0,1]^{\mathbb{Z}}, d_{n}^{\bf a}, \epsilon)\leq (1+\lfloor\frac{12}{\epsilon}\rfloor)^{4n+4l+2}= (1+\lfloor\frac{12}{\epsilon}\rfloor)^{4n+4\lfloor \log_{2}(\frac{4}{\epsilon})\rfloor+2}.$$ 
 On the other hand, any two distinct points in the sets
 $$ \{ x \in ([0,1]^{2})^{\mathbb{Z}}| x_{m}\in (k_{1}, k_{2} )  \  \text{for} \  \text{all} \ 0\leq m < 2n\}  $$
 where $k_{i}\in \{ 0, \epsilon , 2\epsilon, \cdots, \lfloor\frac{1}{\epsilon}\rfloor \epsilon\},$ have distance $\geq \epsilon$ with respect to $d_{n}^{\bf a}.$ It follows that $\sharp ([0,1]^{\mathbb{Z}}, \epsilon) \geq (1+\lfloor \frac{1}{\epsilon}\rfloor)^{4n}.$ Therefore
 $$ S(X_{1}, \epsilon)= \limsup\limits_{n\rightarrow \infty } \frac{1}{n}\log \sharp (X_{1}, d_{n}^{\bf a}, \epsilon)\sim 4|\log\epsilon| \ \ \ (\epsilon \rightarrow 0).$$
 Thus ${\rm mdim}_{M}^{\bf a}(X_{1}, d)={\rm mdim}_{M}^{\bf (1,1)}(X_{1}, d)=4$.
\end{example}

\end{document}